\documentclass[a4paper,11pt]{amsart}
\usepackage{amssymb}
\usepackage{amsmath}
\usepackage{latexsym}
\usepackage{eepic}
\usepackage{epsfig}
\usepackage{graphicx}
\usepackage{amscd}
\usepackage{eufrak}
\usepackage{mathrsfs}
\usepackage[english]{babel}
\usepackage{color}

\DeclareMathAlphabet{\mathpzc}{OT1}{pzc}{m}{it}
\newtheorem{theorem}[equation]{Theorem}
\newtheorem*{theorem*}{Theorem}
\newtheorem{theorem-definition}[equation]{Theorem-Definition}
\newtheorem{lemma-definition}[equation]{Lemma-Definition}
\newtheorem{definition-prop}[equation]{Proposition-Definition}

\newtheorem*{prop*}{Proposition}
\newtheorem{lemma}[equation]{Lemma}
\newtheorem{cor}[equation]{Corollary}
\newtheorem{definition}[equation]{Definition}
\newtheorem*{definition*}{Definition}

\newtheorem*{conjecture*}{Conjecture}
\newtheorem*{claim}{Claim}

\theoremstyle{definition}
\newtheorem{exam}[equation]{Example}

\newtheorem{remark}[equation]{Remark}

\newcommand{\LL}{\ensuremath{\mathbb{L}}}

\newcommand{\Z}{\ensuremath{\mathbb{Z}}}
\newcommand{\Q}{\ensuremath{\mathbb{Q}}}

\newcommand{\R}{\ensuremath{\mathbb{R}}}
\newcommand{\C}{\ensuremath{\mathbb{C}}}

\newcommand{\A}{\ensuremath{\mathbb{A}}}

\newcommand{\cX}{\ensuremath{\mathscr{X}}}
\newcommand{\mX}{\ensuremath{\widehat{\cX}}}

\newcommand{\cU}{\ensuremath{\mathscr{U}}}

\newcommand{\cY}{\ensuremath{\mathscr{Y}}}

\newcommand{\cC}{\ensuremath{\mathscr{C}}}

\renewcommand{\R}{\ensuremath{\mathbb{R}}}
\renewcommand{\C}{\ensuremath{\mathbb{C}}}

\renewcommand{\A}{\ensuremath{\mathbb{A}}}

\renewcommand{\cU}{\ensuremath{\mathscr{U}}}

\renewcommand{\cY}{\ensuremath{\mathscr{Y}}}

\newcommand{\Spec}{\ensuremath{\mathrm{Spec}\,}}

\newcommand{\ord}{\mathrm{ord}}
\newcommand{\red}{\mathrm{red}}

\newcommand{\an}{\mathrm{an}}

\newcommand{\wt}{\mathrm{wt}}
\newcommand{\lct}{\mathrm{lct}}
\newcommand{\Sk}{\mathrm{Sk}}
\newcommand{\spe}{\mathrm{sp}}

\newcommand{\llbr}{[\negthinspace[}
\newcommand{\rrbr}{]\negthinspace]}
\newcommand{\llpar}{(\negthinspace(}
\newcommand{\rrpar}{)\negthinspace)}

\numberwithin{equation}{section}

\newcommand{\sss}{\vspace{5pt} \subsection*{ }\refstepcounter{equation}{{\bfseries(\theequation)}\ }}

\newenvironment{theoremtag}[2]{\vspace{5pt} \noindent {\bf Theorem #1.} {\em #2}}{}

\hyphenpenalty=6000 \tolerance=10000

\begin{document}
\title[Poles of maximal order of motivic zeta functions]{Poles of maximal order of motivic zeta functions}
\subjclass[2010]{Primary 14E30, 14B05, 14D06; Secondary 14E18, 14G22.}

\author{Johannes Nicaise}
\address{KU Leuven\\
Department of Mathematics\\
Celestijnenlaan 200B\\3001 Heverlee \\
Belgium} \email{johannes.nicaise@wis.kuleuven.be}

\author{Chenyang Xu}
\address{Beijing International Center of Mathematics Research\\ Beijing University \\ Beijing \\ China}
\email{cyxu@math.pku.edu.cn}

\begin{abstract}
We prove a 1999 conjecture of Veys, which says that the opposite of the log canonical threshold is the only possible pole of maximal order of Denef and Loeser's motivic zeta function associated with a germ of a regular function on a smooth variety over a field of characteristic zero.
 We apply similar methods to study the weight function on the Berkovich skeleton associated with a degeneration of Calabi-Yau varieties.
 Our results suggest that the weight function induces a flow on
  the non-archimedean analytification of the degeneration towards the Kontsevich-Soibelman skeleton.
\end{abstract}

\maketitle

\section{Introduction}
\sss \label{sss-introf} Let $k$ be a field of characteristic zero  and set $R=k\llbr t\rrbr$ and  $K=k\llpar t \rrpar$. We endow $K$ with its $t$-adic absolute value
 $|x|=\exp(-\mathrm{ord}_tx)$.
 Let $X$ be a connected smooth $k$-variety and let $$f:X\to \Spec k[t]$$ be a non-constant regular function on $X$. We set $\cX=X\times_{k[t]}R$ and we denote by $\widehat{\cX}_K$ the generic fiber of the $t$-adic completion $\widehat{\cX}$ of $\cX$;
 this is a smooth $K$-analytic space. In \cite{MuNi}, Musta\c{t}\u{a} and the first-named author defined the {\em weight function}
  $$\wt_{(f)}:\widehat{\cX}_K\to \R\cup \{+\infty\}$$ that measures the singularities of the zero locus of $f$.
  It is closely related to the thinness function of \cite{BoFaJo}  and the log discrepancy function of \cite{JoMu}.
  If $v$ is the divisorial point of $\widehat{\cX}_K$ associated with a prime divisor $E$ on a birational modification of $X$, then $\wt_{(f)}(v)=\nu_E/N_E$ where $\nu_E$ is the log discrepancy of $E$ with respect to the pair $(X,(f))$ and $N_E$ is the vanishing order of $f$ along $E$. The minimal value of $\wt_{(f)}$ on $\widehat{\cX}_K$ is precisely the log canonical threshold of $f$.
        Every log resolution $h:X'\to X$ of $f$ gives rise to a Berkovich skeleton in $\widehat{\cX}_K$ that is canonically homeomorphic to the dual complex of the strict normal crossings divisor  $(f\circ h)$ on $X'$. The weight function $\wt_{(f)}$ is affine on each face of this skeleton. We will apply techniques from the Minimal Model Program (MMP) to prove that, if $\wt_{(f)}$ is constant on a maximal face of the Berkovich skeleton, then its value  is equal to the log canonical threshold of $f$. To be precise, this property holds only locally over $X$; we refer to Theorem \ref{thm-main} for the exact statement.

\sss This result has interesting consequences for the so-called {\em motivic zeta function} $Z_{f,x}(s)$ of $f$ at a closed point $x$ in the zero locus of $f$ on $X$. This is a rich invariant of the singularity of $f$ at $x$ that was defined by Denef and Loeser using motivic integration (see \cite{DL-barc} for a nice introduction). The motivic zeta function is a rational function over a suitable coefficient ring, and it is a longstanding problem to understand the nature of its poles (or the poles of closely related invariants, such as the topological zeta function or Igusa's $p$-adic zeta function). The Monodromy Conjecture predicts that every pole of the motivic zeta function is a root of the Bernstein polynomial of $f$.

   We denote by $\lct_x(f)$ the log canonical threshold of $f$ at $x$. The function $Z_{f,x}(s)$ has an explicit expression in terms of the geometry of the log resolution $h$, and this expression implies that the order of each pole is at most $n=\dim(X)$.
 Moreover, it is not difficult to deduce that $Z_{f,x}(s)$ has a pole at $s=-\lct_x(f)$, and that this is its largest pole.
   Veys conjectured in \cite{LaVe} that, if the topological zeta function of $f$ has a pole of order $n$, then this pole is the largest pole of the topological zeta function. We will deduce from Theorem \ref{thm-main} the following stronger form of Veys's conjecture.

  \begin{theoremtag}{\ref{thm-maxorder} (Veys's Conjecture)}{ If $s_0$ is a pole of order $n$ of the motivic zeta function $Z_{f,x}(s)$, then
   $s_0=-\lct_x(f)$. In particular, $s_0$ is a root of the Bernstein polynomial of $f$.}
\end{theoremtag}

  \smallskip
\noindent This statement implies the original conjecture of Veys because the order of a rational number $s$ as a pole of the motivic zeta function is at least the order of $s$ as a pole of the topological zeta function, since the latter is a specialization of the former.

\sss  Theorem \ref{thm-main} has an interesting counterpart for degenerations of Calabi-Yau varieties.
   The important ingredients in the definition of the weight function $\wt_{(f)}$ in \cite{MuNi} are the smooth $K$-analytic space $\widehat{\cX}_K$ and a volume form on $\widehat{\cX}_K$, a so-called Gelfand-Leray form, that is constructed (locally) from a volume form on $X$ (see \cite[\S6.3]{MuNi}). Thus it is natural to look at other situations where a smooth $K$-analytic space comes equipped with a volume form. We replace the $k[t]$-scheme $X$ from \eqref{sss-introf} by a geometrically connected smooth projective $K$-variety $X$ with trivial canonical sheaf. Let $\omega$ be a  volume form on $X$.
 We denote by $X^{\an}$ the Berkovich analytification of $X$; this $K$-analytic space will play the role of $\widehat{\cX}_K$. In \cite{MuNi}, Musta\c{t}\u{a} and the first-named author defined the weight function
  $$\wt_{\omega}:X^{\an}\to \R\cup\{+\infty\}$$
that measures the degeneration of $X$ at $t=0$. The locus where $\wt_{\omega}$ reaches its minimal value is independent of $\omega$.
 It is called the {\em essential skeleton} of $X$ and denoted by $\Sk(X)$. The essential skeleton is a non-empty compact subspace of $X^{\an}$ with a canonical piecewise integral affine structure. This object first appeared in Kontsevich and Soibelman's non-archimedean interpretation of Mirror Symmetry \cite{KS}.

 The essential skeleton can be computed as follows. Let $\cX$ be a regular proper $R$-model of $X$ whose special fiber $\cX_k$ is a divisor with strict normal crossings. Then there exists a canonical embedding of the dual complex of $\cX_k$ in $X^{\an}$. The image of this embedding is called the Berkovich skeleton of $\cX$ and denoted by $\Sk(\cX)$.
  It follows from techniques introduced by Berkovich \cite{berk-contr} and Thuillier \cite{thuillier} that $\Sk(\cX)$ is a strong deformation retract of $X^{\an}$.
  The weight function $\wt_{\omega}$ can reach its minimal value only at points of $\Sk(\cX)$, and it is affine on every face of $\Sk(\cX)$. It follows that the essential skeleton $\Sk(X)$ is a union of faces of $\Sk(\cX)$ (see \cite[4.5.5]{MuNi}).
 We will prove the following analog of Theorem \ref{thm-main}.

   \begin{theoremtag}{\ref{thm-KS}}{If $\tau$ is a maximal face of $\Sk(\cX)$ and $\wt_{\omega}$ is constant on $\tau$ with value $w$, then $w$ must be equal to the minimal value of $\wt_{\omega}$ on $X^{\an}$. Thus $\tau$ is contained in the essential skeleton $\Sk(X)$.}
 \end{theoremtag}

  \smallskip

\sss  In \cite{NiXu} we proved that $\Sk(X)$ is equal to the Berkovich skeleton of any good minimal $dlt$-model of $X$ over $R$ (the kind of model produced by the MMP).
   We then deduced from the results in \cite{dFKX}, obtained by a detailed analysis of the steps in the MMP, that the essential skeleton $\Sk(X)$ is a strong deformation retract of $X^{\an}$. It seems plausible that one can use the weight function to create a natural flow on $X^{\an}$ in the direction of decreasing values of $\wt_{\omega}$, and use this flow to contract $X^{\an}$ onto the subspace $\Sk(X)$ where $\wt_{\omega}$ takes its minimal value. Theorem \ref{thm-KS} supports this strategy; further evidence is provided by the following result.

    \begin{theoremtag}{\ref{thm-CY}}{
    For every real number $w$ we denote by $\Sk(\cX)^{\leq w}$ the subcomplex of $\Sk(\cX)$ spanned by the vertices where the value of $\wt_{\omega}$ is at most $w$. Then there exists a collapse of $\Sk(\cX)$ to the essential skeleton $\Sk(X)$ which simultaneously collapses $\Sk(\cX)^{\leq w}$ to $\Sk(X)$ for all $w$ greater than the minimal value of $\wt_{\omega}$ on $X^{\an}$.
    In particular,   $\Sk(X)$ is a strong deformation retract of $\Sk(\cX)^{\leq w}$.}
      \end{theoremtag}

\smallskip
\noindent A collapse is a particular kind of strong deformation retraction on a cell complex; see \eqref{sss-locflow} for a precise definition. Theorem \ref{thm-CY} also has a counterpart for hypersurface singularities: the analog of the essential skeleton is introduced in Definition \ref{def-essentialsk}, and the local version of Theorem \ref{thm-CY} is stated in Theorem \ref{thm-locallevel}.

\sss The paper is organized as follows.
 In Section \ref{sec-max} we deduce from the MMP the main technical result needed to prove Veys's conjecture (Theorem \ref{thm-main}).  The proof of the conjecture is given in Section \ref{sec-conj} (Theorem \ref{thm-maxorder}). To keep the proof as accessible as possible we avoided the language of weight functions on Berkovich spaces in these sections, although this interpretation was an important guide to obtain the results. In Section \ref{sec-weight} we explain the relation with weight functions, we define the essential skeleton of a hypersurface singularity and we study the level sets of the weight function on Berkovich skeleta (Theorem \ref{thm-locallevel}).
 Section \ref{sec-CY} contains the analogous results for degenerations of Calabi-Yau varieties (Theorems \ref{thm-KS} and \ref{thm-CY}).

\subsection*{Acknowledgements} We are grateful to Mircea Musta\c{t}\v{a} and Wim Veys for helpful discussions, and to the referees for their careful reading of the manuscript and their valuable suggestions. A part of this joint work was done while JN
 was a member of the program {\em Model Theory, Arithmetic Geometry and Number Theory} at MSRI, Berkeley, and CX was a member of the trimester program {\it Algebraic Geometry} at Hausdorff Research Institute for Mathematics, Bonn. The authors would like to thank both institutes for their hospitality. JN is partially supported by ERC Starting Grant MOTZETA (project 306610). CX is partially supported by the grant  ``Recruitment Program of Global Experts''.

\section{Maximal intersections with equal weights}\label{sec-max}

\sss \label{sss-wt}  We fix a base field $k$ of characteristic zero. Let $X$ be a connected smooth $k$-variety of dimension $n$ and let $\Delta$ be an effective $\Q$-divisor on $X$.
 Let $v$ be a divisorial valuation on $X$ with center contained in $\Delta$. This means that $v$ is a real valuation on the function field $k(X)$ and
  there exist a birational morphism $h:Y\to X$ of $k$-varieties, with $Y$ normal, and a prime component $E$ of $h^*\Delta$ such that $v$ is a real multiple of the
  valuation $\ord_E$ associated with $E$.
  We denote by $N_E$ the multiplicity of
$h^*\Delta$ along $E$  and by $\nu_E-1$ the
multiplicity of the relative canonical divisor $K_{Y/X}$ along $E$.
We set
$$\wt_{\Delta}(v)=\frac{\nu_E}{N_E}$$ and we call this positive rational
number the weight of $\Delta$ at $v$.
 This definition only depends on $v$, and not on the choice of the model $Y$.
 Note that $\wt_{\Delta}(v)=\wt_{\Delta}(\ord_E)$. For this reason, we will often denote $\wt_{\Delta}(v)$ by $\wt_{\Delta}(E)$.

\sss We fix a point $x$ on $X$.
 The log canonical threshold of $(X,\Delta)$ at $x$ is defined as
$$\lct_{x}(X,\Delta)=\inf_v\{\wt_{\Delta}(v)\},$$
 where $v$ runs through the set of divisorial valuations on $X$ whose center lies in $\Delta$ and contains $x$.
  It is well known that, in order to compute this infimum, it suffices to let $v$ run through the set of divisorial valuations associated with the prime components
   of the total transform of $\Delta$ on some log resolution of $(X,\Delta)$.

\sss \label{sss-assum} Let $h:Y\to X$ be a log resolution of $(X,\Delta)$ that is an isomorphism over $X\setminus \Delta$.
 We  write $$h^*\Delta=\sum_{i\in I}N_iE_i, \quad K_{Y/X}=\sum_{i\in I}(\nu_i-1)E_i$$
 where the $E_i$ are the prime divisors in $h^*\Delta$. For every non-empty subset $I'$ of $I$, we set $E_{I'}=\cap_{i\in I'}E_i$.
 Let $J$ be a non-empty subset of $I$ and let $C$ be a connected component of $E_J$. We assume that the intersection $h^{-1}(x)\cap C$ is non-empty but $h^{-1}(x)\cap C\cap E_i$ is empty for every $i$ in $I\setminus J$. This happens, for instance, if $C$ is a point contained in $h^{-1}(x)$.
 Our main technical result is the following.

\begin{theorem}\label{thm-main}
 We keep the notations and assumptions of \eqref{sss-assum}. If  we assume that the value
$\wt_{\Delta}(E_j)$ is the same for all $j$ in $J$ and we denote this value by $w$, then
we have $w=\lct_x(X,\Delta)$.
\end{theorem}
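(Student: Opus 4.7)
The inequality $w \ge \lct_x(X,\Delta)$ is immediate: for each $j \in J$ the image $h(E_j)$ contains $x$, since $\emptyset \neq C \cap h^{-1}(x) \subseteq E_j$, and therefore $\lct_x(X,\Delta) \le \wt_\Delta(E_j) = w$. The content of the theorem is the reverse inequality, which is equivalent to saying that the pair $(X, w\Delta)$ is log canonical at $x$. Writing
$$h^*(K_X + w\Delta) = K_Y + \sum_{i \in I} \alpha_i E_i, \qquad \alpha_i = wN_i - \nu_i + 1,$$
the hypothesis $\wt_\Delta(E_j) = w$ for $j \in J$ reads $\alpha_j = 1$, and log canonicity of $(X, w\Delta)$ at $x$ amounts to the bound $\alpha_i \le 1$ for every $i \in I$ with $x \in h(E_i)$.

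I would argue by contradiction, assuming that some $i_0 \in I \setminus J$ satisfies $\alpha_{i_0} > 1$ and $x \in h(E_{i_0})$. The plan is to run a relative MMP that produces a log canonical modification of $(X, w\Delta)$ near $x$ while preserving the stratum $C$. Set
$$\Theta = \sum_{i \in I} \min(\alpha_i, 1)\, E_i, \qquad A = \sum_{i \in I} (\alpha_i - 1)_+\, E_i,$$
so that $(Y, \Theta)$ is log smooth with coefficients in $[0,1]$, $A \ge 0$, and $K_Y + \Theta + A = h^*(K_X + w\Delta)$. Because $\alpha_j = 1$ for every $j \in J$, the divisors $E_j$ enter $\Theta$ with coefficient $1$ and are absent from $A$, so the stratum $C$ is disjoint from $\operatorname{Supp}(A)$. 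Moreover $-(K_Y + \Theta) \sim_{h, \Q} A$ is effective, and a relative $(K_Y + \Theta)$-MMP over $X$ with scaling of a relatively ample divisor terminates, by BCHM and its extensions, in a model $h^+ \colon Y^+ \to X$ on which $K_{Y^+} + \Theta^+$ is $h^+$-nef.

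Since $K_Y + \Theta$ is $h$-negative only inside $\operatorname{Supp}(A)$, the steps of the MMP take place inside $\operatorname{Supp}(A)$, and the divisors $E_j$ ($j \in J$) together with the stratum $C$ survive birationally. Pushing forward yields $K_{Y^+} + \Theta^+ \sim_{h^+,\Q} -A^+$, so that $-A^+$ is $h^+$-nef with $A^+ \ge 0$. The negativity lemma, applied after distinguishing the exceptional and non-exceptional parts of $A^+$, forces $A^+ = 0$ in a neighborhood of $x$. Hence $K_{Y^+} + \Theta^+ = (h^+)^*(K_X + w\Delta)$ there, and since $\Theta^+$ has coefficients in $[0,1]$, the pair $(X, w\Delta)$ is log canonical at $x$, contradicting $\alpha_{i_0} > 1$.

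The principal obstacle is to keep the MMP compatible with the distinguished stratum $C$: one must check that the maximality condition $C \cap E_i \cap h^{-1}(x) = \emptyset$ for $i \in I \setminus J$ is not destroyed by divisorial contractions or flips taking place in $\operatorname{Supp}(A)$, so that no new intersections are created over $x$ and the strict transform of $E_{i_0}$ still meets $(h^+)^{-1}(x)$. Coupled with this, one must handle possibly non-$h$-exceptional components of $A$ (arising as strict transforms of components of $\Delta$ of sufficiently large multiplicity) in the negativity lemma step, and secure termination of the relevant MMP with scaling in the dlt setting. These local-to-global controls are the main technical work of Section~\ref{sec-max}.
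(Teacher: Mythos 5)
Your overall strategy matches the paper's: introduce a boundary on $Y$ of the form ``min with $1$'' so that the pair becomes dlt, run a relative MMP over $X$, and conclude with the negativity lemma that the pullback discrepancy divisor vanishes over $x$. Your $\Theta=\sum\min(\alpha_i,1)E_i$ differs from the paper's $\Delta_0=h^{-1}_*\Delta'+(K_{Y/X})_{\red}$ only on $h$-exceptional divisors of weight $>w$ (where the paper keeps coefficient $1$, you take $\alpha_i<1$); this is a cosmetic variation, and the negativity-lemma endgame you sketch (the support of $-A^+$ is a union of $h^+$-fibers, so once it misses a single point over $x$ it misses the whole fiber) is exactly how the paper concludes.

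The genuine gap is in the middle step, which you yourself flag as ``the principal obstacle'': showing that the MMP is an isomorphism on a neighbourhood of $C\cap h^{-1}(x)$, so that the strict transforms of $E_j$, $j\in J$, still meet $(h^+)^{-1}(x)$ at a point away from $\operatorname{Supp}(A^+)$. Your justification — ``$K_Y+\Theta$ is $h$-negative only inside $\operatorname{Supp}(A)$, so the steps of the MMP take place inside $\operatorname{Supp}(A)$'' — is false as stated. From $K_Y+\Theta\equiv_h -A$, a $(K_Y+\Theta)$-negative extremal ray $R$ only satisfies $R\cdot A>0$; the contracted curves must \emph{meet} $\operatorname{Supp}(A)$, but need not be \emph{contained} in it. In particular, a contracted curve in the fiber $f^{-1}(x)$ could pass through a point of $C_\ell$ over $x$ while touching $\operatorname{Supp}(A)$ only away from $C_\ell$, destroying exactly the disjointness $C_\ell\cap f^{-1}(x)\cap E_i=\emptyset$ ($i\notin J$) that you need to propagate. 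The paper proves the isomorphism claim by an induction that invokes Ambro's result (a special case of \cite[6.6]{Ambro}, restated as \cite[Prop.~25]{dFKX}): the log canonical centers of the dlt pair meeting the contracted fiber $g^{-1}(g(y))$ have a unique minimal element, which must be $C_\ell$, and from minimality one deduces $E\cdot R=0$ for every component $E$ of $\Delta_\ell^{=1}$ other than the $E_j$, $j\in J$. This forces $R\cdot(K_{Y_\ell}+\Delta_\ell)=0$, contradicting extremality, so $g$ cannot contract a curve through $y$. Some input of this kind on minimal lc centers is essential, and without it your argument does not close.
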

\begin{proof}
 We write $\Delta$ as a sum $A+B$ of effective divisors without common components such that
 $wA\leq A_{\red}$ and either $wB> B_{\red}$ or $B=0$.
 We define a new divisor $\Delta'$ on $X$ by
$$\Delta'=wA+B_{\red}=\min\{w\Delta,\Delta_{\red}\}$$ where the minimum is taken componentwise.
 We will prove eventually that $B=0$ and $\Delta'=wA$ locally at $x$, but at this point we need to construct $\Delta'$ in this artificial way to ensure that it is a boundary.

  We set $Y_0=Y$ and $\Delta_0=h^{-1}_*(\Delta')+(K_{Y/X})_{\red}$.
  Then we run the relative MMP for the pair $(Y_0,\Delta_0)$ over $X$ with scaling of some ample divisor.
 Since ${\rm Supp}( \Delta_0)={\rm Supp}(h^*\Delta)$, we know that for sufficiently small $\varepsilon>0$, this is the same as running the relative MMP for the $klt$ pair $(Y_0,\Delta_0-\varepsilon h^*\Delta)$. Hence, it follows from \cite{BCHM} that this MMP terminates with a minimal model.

 The outcome is a series of birational maps
 $$Y=Y_0\dashrightarrow Y_1 \dashrightarrow \ldots \dashrightarrow Y_m$$ where each of the $Y_i$ is a $\Q$-factorial normal projective $X$-scheme and each of the birational maps
 is a map of $X$-schemes.  If we denote by $\Delta_i$ the pushforward of the divisor $\Delta_0$ to $Y_i$, then the pair $(Y_i,\Delta_i)$ is $dlt$ for every $i$, and $K_{Y_m}+\Delta_m$ is nef over $X$.

\begin{claim} The birational map  $Y_{0}\dashrightarrow Y_{m}$
  is an open embedding on some open neighbourhood of $C\cap h^{-1}(x)$ in $Y_{0}$.
\end{claim}

\begin{proof}
  Let $\ell$ be an element of $\{-1,0,\ldots,m-1\}$. We will prove by induction on $\ell$ that the map $Y_0\dashrightarrow Y_{\ell+1}$ is defined at every point of $C\cap h^{-1}(x)$ and that it is an open embedding
  on some open neighbourhood of $C\cap h^{-1}(x)$ in $Y_0$. This is trivial for $\ell=-1$, so that we may assume that $\ell\geq 0$ and that the property holds for $Y_0\dashrightarrow Y_{\ell}$.
   With a slight abuse of notation, we will again write $E_i$ for the pushforward of $E_i$ to $Y_{\ell}$, for every $i$ in $I$. We write $C_\ell$ for the image of $C$ in $Y_{\ell}$. Then it is clear from the assumptions in \eqref{sss-assum} and the induction hypothesis that $C_\ell$ is still a connected component of the intersection of the divisors $E_j$, $j\in J$ on $Y_\ell$, and that for every $i\in I\setminus J$, the divisor $E_i$ on $Y_\ell$ is disjoint from the fiber of $C_\ell$ over $x$.
    We
   write $\Delta^{=1}_\ell$ for the reduced divisor on $Y_\ell$ consisting of the
    components of multiplicity one in $\Delta_\ell$.

 Let $y$ be a point of $C_\ell$ lying over $x\in X$. The birational map $Y_{\ell}\dashrightarrow Y_{\ell+1}$ is either a divisorial contraction or a flip. In both cases, it
  is induced by an extremal ray $R$ of $\overline{NE}(Y_{\ell}/X)$ such that \begin{equation}\label{eq-contr}R\cdot (K_{Y_{\ell}}+\Delta_{\ell}) <0.\end{equation}
 We denote by $g:Y_{\ell} \to Z$ the contraction of $R$.
  Since the pair $(Y_\ell,\Delta_\ell)$ is $dlt$, its log canonical centers are precisely the connected components of subsets of the form $D_1\cap\cdots \cap D_r$
    where $D_1,\ldots,D_r$ are prime components of $\Delta^{=1}_\ell$.
  A special case of \cite[6.6]{Ambro} (see also \cite[Prop.~25]{dFKX}) tells us that the set $S$ of log canonical centers of
  $(Y_{\ell},\Delta_{\ell})$ intersecting  the fiber  $g^{-1}(g(y))$ has a unique minimal element. But $C_\ell$ is such a minimal element,  because no component $E_i$ with $i\in I\setminus J$ intersects the fiber of $C_\ell$ over $x$ (this fiber contains $g^{-1}(g(y))$ since $g$ is a morphism of $X$-schemes).

        Now suppose that $g$ contracts a curve passing through $y$; the class of any such curve generates the ray $R$.
   Then $E\cdot R=0$ for every prime component $E$ of $\Delta^{=1}_{\ell}$ that is not one of the components $E_j$ with $j\in J$.
   Otherwise, $E$ would meet $g^{-1}(g(y))$ and $S$ would have a minimal element contained in $E$, which is impossible
   since $E$ does not contain $C_\ell$.

  In particular, $E\cdot R=0$ for every component $E$ of $(\Delta_\ell)_{\red}-\sum_{j\in J}E_j$
  that is contracted on $X$ or contained in the strict transform of $B$.
 Denoting by $f$ the morphism $f:Y_\ell\to X$, we compute:
   \begin{eqnarray*}
R\cdot (K_{Y_{\ell}}+\Delta_{\ell})&=& R\cdot (K_{Y_{\ell}}+\Delta_{\ell}-f^*(K_X+w\Delta))
\\ &=& R\cdot (K_{Y_{\ell}/X}+(K_{Y_{\ell}/X})_{\red}+f^{-1}_*\Delta' -f^*(w\Delta))
\\ &=& R\cdot(\sum_{j\in J}(\nu_j-wN_j)E_j+f^{-1}_*(B_{\red}-wB))
\\ &=& 0.
  \end{eqnarray*}
This contradicts the inequality \eqref{eq-contr}.
  We conclude that $g$ cannot contract a curve through $y$. Therefore, $g$
  must be an open embedding on some open neighbourhood of $y$ in $Y_{\ell}$, by Zariski's Main Theorem. It follows that  $Y_{\ell}\dashrightarrow Y_{\ell+1}$ is an open embedding on some open neighbourhood of the fiber of $C_\ell$ over $x$.
\end{proof}

Using this result, we can finish the proof of Theorem \ref{thm-main}. We denote by $f$ the morphism $f:Y_m\to X$ and we
  write $E_j$ for the image of $E_j$ in $Y_m$, for every $j\in J$.
Now consider the divisor
$$D= K_{Y_m}+\Delta_m-f^*(K_X+w\Delta)$$
 on $Y_m$. This divisor is nef over $X$. We can write $D$ as
 $$D=D_{\mathrm{exc}}-f^{-1}_*(wB-B_{\red})$$ where the divisor $D_{\mathrm{exc}}$ is $f$-exceptional and $wB-B_{\red}$ is effective.
 The negativity lemma \cite[3.39]{kollar-mori} implies that $-D$ is effective and that the support of $D$ is a union of fibers
  of $f$. But for every $j$ in $J$, the multiplicity of $D$ along $E_j$ is equal to $\nu_j-wN_j=0$ and thus $f^{-1}(x)\cap \mathrm{Supp}(D)=\emptyset$.
  This means that locally around $x$, we have $B=0$ and $\Delta'=w\Delta$. It also follows that
  $$(Y_m,f^*(w\Delta)-K_{Y_m/X})=(Y_m,\Delta_m)$$
  over some open neighbourhood of $x$ in $X$.
   This pair is $dlt$ and $\Delta_m$ contains components of multiplicity one intersecting $f^{-1}(x)$
  (for instance, the components $E_j$
  with $j\in J$). Thus
  $w=\lct_x(X,\Delta)$.
\end{proof}

\sss \label{sss-LV} Now suppose that we are still in the situation of Theorem \ref{thm-main}, that $\Delta$ is an effective $\Z$-divisor on $X$,
 and that the cardinality of $J$ is equal to $n$, the dimension of $X$ (in this case, $C$ is a point).
  Then Laeremans and Veys proved in \cite[Thm.~3.2]{LaVe} by combinatorial arguments that the weight $w$ is of the form $1/N$ for some positive integer $N$.

\begin{remark}
 One can also deduce the result in \cite[Thm.~3.2]{LaVe} from the following property.  We refer to \cite[Def.~13]{dFKX} for the definition of a $dlt$-modification.
  Let $Z$ be a smooth $k$-variety and let $D_0$ be an effective $\Z$-divisor on $Z$. Denote by $c$ the log canonical threshold of
 the pair $(Z,D_0)$, and set
 $D= cD_0$. Let  $h: (Z',D') \to (Z,D)$ be a $dlt$-modification of $(Z,D)$, where $D'$ denotes the log pullback of $D$ to $Z'$. Assume that $(Z',D')$ has a log canonical center $z$ of dimension zero.
 Then $c=1/N$ for some positive integer $N$.

 This property can be proven by running a relative MMP for $(Z',E)$ over $Z$, where $E$ denotes the $h$-exceptional part of $D'$, and applying adjunction to a one-dimensional log canonical center through $z$ that is contracted by the MMP.
    Since we will not use this result, we omit the details of the proof.
\end{remark}

\section{Poles of motivic zeta functions}\label{sec-conj}
\sss Let $X$ be a connected smooth $k$-variety, let $x$ be a closed point on $X$, and let $f$ be a regular function on $X$ such that $f(x)=0$.
 Denef and Loeser defined the {\em motivic zeta function} $Z_{f,x}(s)$
  of the germ of $f$ at $x$, an invariant that measures the singularity of $f$ at the point $x$.  It is  a power series
 in $\LL^{-s}$ over a certain Grothendieck ring $\mathcal{M}_x^{\hat{\mu}}$ of $\kappa(x)$-varieties with an action of the profinite group scheme $\hat{\mu}$ of roots of unity over $k$.
  Here $\kappa(x)$ denotes the residue field of $X$ at $x$ and $\LL^{-s}$ should be viewed as a formal variable. The zeta function $Z_{f,x}(s)$ is obtained from the generating series $Z_f(T)$ defined in \cite[\S3.2]{DL-barc}
  by applying the base change morphism $\mathcal{M}_{X_0}^{\hat{\mu}}\to \mathcal{M}_{x}^{\hat{\mu}}$ to its coefficients and setting $T=\LL^{-s}$.
  Closely related invariants are the so-called {\em naive} motivic zeta function $Z^{\mathrm{naive}}_{f,x}(s)$, which is a power series in $\LL^{-s}$ over the Grothendieck ring $\mathcal{M}_x$
  of $\kappa(x)$-varieties without group action, and the topological zeta function $Z_{f,x}^{\mathrm{top}}(s)$, which is an element of the field of rational functions $\Q(s)$.

\sss \label{sss-formula} What is important for our purposes is that each of these zeta functions can be explicitly computed on a log resolution. Set $\Delta=(f)$ and let $h:Y\to X$ be a log resolution of $(X,\Delta)$ that is an isomorphism over $X\setminus \Delta$.
 We denote by $E_i,\,i\in I$ the prime divisors in $h^*\Delta$ and we write $h^*\Delta=\sum_{i\in I}N_iE_i$ and $K_{Y/X}=\sum_{i\in I}(\nu_i-1)E_i$. For every non-empty subset $J$ of $I$, we set $E_J=\cap_{j\in J}E_j$ and $E^o_J=E_J\setminus (\cup_{i\notin J}E_i)$.
 Then we have the following expressions for the zeta functions introduced above (see \cite[\S3.3 and \S3.4]{DL-barc}):
 \begin{eqnarray*}
 Z_{f,x}(s) &=& \sum_{\emptyset\neq J\subset I}(\LL-1)^{|J|-1}[\widetilde{E}^o_J\times_{X}x]\prod_{j\in J}\frac{\LL^{-\nu_j-N_js}}{1-\LL^{-\nu_j-N_js}},
\\ Z^{\mathrm{naive}}_{f,x}(s) &=& \sum_{\emptyset\neq J\subset I}(\LL-1)^{|J|}[E^o_J\times_{X}x]\prod_{j\in J}\frac{\LL^{-\nu_j-N_js}}{1-\LL^{-\nu_j-N_js}},
\\ Z^{\mathrm{top}}_{f,x}(s) &=& \sum_{\emptyset\neq J\subset I}\chi(E_J^o\times_X x)\prod_{j\in J}\frac{1}{N_js+\nu_j}.
 \end{eqnarray*}
 Here $\LL$ denotes the class of the affine line $\A^1_k$, $\widetilde{E}^o_J$ is a certain finite \'etale cover of $E^o_J$ with an action of the group scheme $\hat{\mu}$, and
 $\chi(\cdot)$ denotes the $\ell$-adic Euler characteristic (which coincides with the singular Euler characteristic for the complex topology if $k$ is a subfield of $\C$).

\sss  It is obvious
from these explicit formulas that each pole is of the form $\wt_{\Delta}(E_i)=-\nu_i/N_i$ for some $i\in I$ (see Remark \ref{rem:pole} for a precise definition of the poles).
 Thus the largest possible pole is the negative of the log canonical threshold
 $$\lct_x(X,\Delta)=\min\left\{\frac{\nu_i}{N_i}\,|\,i\in I,\,x\in h(E_i)\right\}$$
  of $f$ at $x$.
However, in practice most of these candidate-poles will not be actual poles due to
cancellations in the formulas. This phenomenon would be explained by Denef and Loeser's motivic monodromy conjecture,
 which predicts that every pole of each of these three zeta functions is a root of the Bernstein polynomial of $f$.
  This conjecture was motivated by an analogous conjecture of Igusa for  $p$-adic local zeta functions of polynomials over number fields.
   Recall that
   $-\lct_x(X,\Delta)$ is always the largest root of the Bernstein polynomial of $f$ at $x$; see for instance \cite[10.6]{kollar}.
 The monodromy conjecture has been proven if $\dim(X)=2$ \cite{loeser, Rod} and also for some special classes of singularities, but it remains wide open in general. We refer
  to \cite{ni-mon} for a gentle introduction and a survey of some known results.

\sss It is also clear from the formulas in \eqref{sss-formula} that the order of a pole is at most $n=\dim(X)$, since $E_J$ is empty for every subset $J$ of $I$ of cardinality strictly larger than $n$.
 In \cite[0.2]{LaVe}, Veys made the following conjecture.
 \begin{conjecture*}[Veys]
  If $Z^{\mathrm{top}}_{f,x}(s)$ has a pole $s_0$ of order $n$, then $s_0$ must be the largest pole of
  $Z^{\mathrm{top}}_{f,x}(s)$.
 \end{conjecture*}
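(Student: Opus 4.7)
The plan is to derive Veys's Conjecture as a direct consequence of Theorem~\ref{thm-main} applied to the Denef--Loeser formula recalled in \eqref{sss-formula}. The cleanest route is to prove the stronger motivic statement, Theorem~\ref{thm-maxorder}; the topological version then follows at once because the topological zeta function is a specialization of the motivic one, and specialization can only decrease the order of a pole. I describe the argument in the topological setting, where the combinatorics is most transparent; the motivic case requires the same steps together with a positivity argument in a suitable localization of $\mathcal{M}^{\hat{\mu}}_x$.

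Suppose $s_0$ is a pole of order $n$ of $Z^{\mathrm{top}}_{f,x}(s)$. For each subset $J\subset I$ appearing in the formula of \eqref{sss-formula}, the product $\prod_{j\in J}(N_j s+\nu_j)^{-1}$ has $s_0$ as a pole of order $|\{j\in J : \nu_j/N_j=-s_0\}|\leq |J|\leq n$. Extracting the coefficient of $(s-s_0)^{-n}$ in the Laurent expansion of $Z^{\mathrm{top}}_{f,x}(s)$ therefore yields
\begin{equation*}
C_{s_0}\,=\,\sum_{J}\chi(E^o_J\times_X x)\prod_{j\in J}\frac{1}{N_j},
\end{equation*}
where $J$ ranges over the subsets of $I$ of cardinality $n$ satisfying $\nu_j/N_j=-s_0$ for every $j\in J$. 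Since $(f\circ h)$ has strict normal crossings on $Y$ and $\dim Y=n$, each such $E_J$ is a finite disjoint union of reduced points at which exactly the $n$ components $E_j$, $j\in J$, meet; hence $E_J^o=E_J$ locally and $\chi(E^o_J\times_X x)$ equals the number of points of $E_J$ mapping to $x$. All terms in $C_{s_0}$ are thus non-negative rationals, so no cancellation is possible.

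By hypothesis $C_{s_0}\neq 0$, so there exist a subset $J\subset I$ of cardinality $n$ with $\nu_j/N_j=-s_0$ for every $j\in J$ and a point $C\in E_J$ satisfying $h(C)=x$. Since $(f\circ h)$ has strict normal crossings and already $n$ components meet at $C$, we must have $C\cap E_i=\emptyset$ for every $i\in I\setminus J$. The hypotheses of Theorem~\ref{thm-main} are therefore satisfied for this $J$ and for $C$ viewed as a (zero-dimensional) connected component of $E_J$, with common weight $w=\nu_j/N_j=-s_0$. Theorem~\ref{thm-main} gives $-s_0=\lct_x(X,(f))=\lct_x(f)$; since $-\lct_x(f)$ is already known to be the largest pole of $Z^{\mathrm{top}}_{f,x}(s)$, we conclude that $s_0$ is indeed this largest pole.

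The main obstacle is singling out a maximal intersection $J$ satisfying all the hypotheses of Theorem~\ref{thm-main}. Once the Denef--Loeser formula is in hand this reduces to the pole-order bookkeeping above, after which the SNC condition automatically delivers the required disjointness from the $E_i$ with $i\in I\setminus J$. For the motivic upgrade to Theorem~\ref{thm-maxorder} one has to replace the positivity of Euler characteristics by effectivity of the classes $[\widetilde{E}_J^o\times_X x]$ in an appropriate localization of $\mathcal{M}^{\hat{\mu}}_x$ and verify that no cancellation among different contributing $J$'s can occur; this is the only point requiring extra care, as the geometric content of the proof is entirely supplied by Theorem~\ref{thm-main}.
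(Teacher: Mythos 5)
Your proof is correct and follows essentially the same route as the paper: extract from the Denef--Loeser formula a set $J$ of cardinality $n$ with equal weights $\nu_j/N_j=-s_0$ whose intersection point maps to $x$, observe that the SNC condition forces the point to avoid all $E_i$ with $i\notin J$, and then apply Theorem~\ref{thm-main} to conclude $-s_0=\lct_x(f)$. The only remark worth making is that the positivity of the Euler characteristics (and, in the motivic case, the effectivity of the classes $[\widetilde{E}_J^o\times_X x]$) is not actually needed for this direction of the argument: if the $(s-s_0)^{-n}$ Laurent coefficient $C_{s_0}$ is nonzero, then trivially at least one summand is nonzero, with no appeal to signs; likewise in the motivic setting it suffices that each term of the explicit formula contributes a denominator with at most $|\{j\in J:\nu_j/N_j=-s_0\}|$ factors at $s_0$, so if no qualifying $J$ exists the whole sum already lies in the submodule defining pole order $\le n-1$. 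The positivity you invoke is genuinely required for the \emph{converse} statement, Theorem~\ref{thm-maxorder}\eqref{it:ispole}, namely that $-\lct_x(f)$ really is a pole of the expected order, and there the paper handles it via the Poincar\'e-polynomial specialization of the Grothendieck ring rather than by effectivity in a localization of $\mathcal{M}_x^{\hat{\mu}}$.
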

 Veys proved this statement if $n=2$ \cite[4.2]{veys} and also if $f$ is a polynomial that is non-degenerate with respect to its Newton polyhedron \cite[2.4]{LaVe},
  but these were the only cases known so far.
  We can deduce from Theorem \ref{thm-main} the following refinement of Veys's conjecture.

\begin{theorem}\label{thm-maxorder}
Let $X$ be a connected smooth $k$-variety of dimension $n$, let $x$ be a closed point on $X$, and let $f$ be a non-constant regular function on $X$.
 Let $h:Y\to X$ be a log resolution for $f$ as in \eqref{sss-formula}, and denote by $m$ the largest positive integer such that there exists a subset $J$ of $I$ of cardinality $m$
 with $E_J\cap h^{-1}(x)\neq \emptyset$ and $\nu_j/N_j=\lct_x(X,\Delta)$ for every $j\in J$.
  Then the following properties hold.
 \begin{enumerate}
 \item \label{it:ispole} The motivic zeta functions $Z_{f,x}(s)$ and $Z^{\mathrm{naive}}_{f,x}(s)$ have a pole of order $m$ at $s=-\lct_x(X,\Delta)$, and this is their largest pole. If $m=n$, then the topological zeta function $Z^{\mathrm{top}}_{f,x}(s)$ has a pole of order $n$ at $s=-\lct_x(X,\Delta)$, and this is its largest pole.

 \item \label{it:maxord} Conversely, if $s_0$ is a pole of order $n$ of $Z_{f,x}(s)$, $Z^{\mathrm{naive}}_{f,x}(s)$ or $Z^{\mathrm{top}}_{f,x}(s)$, then
   $s_0=-\lct_x(X,\Delta)$ and $m=n$. Moreover, $s_0$ is of the form $-1/N$ for some positive integer $N$.
 \end{enumerate}
\end{theorem}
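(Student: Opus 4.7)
The proof combines the explicit formulas of \eqref{sss-formula} with Theorem \ref{thm-main} as the key geometric input; part \eqref{it:ispole} is largely a direct reading of the formulas, while part \eqref{it:maxord} (Veys's conjecture) is the main content and is where Theorem \ref{thm-main} enters decisively.

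For part \eqref{it:ispole}, I observe that every candidate pole in the three formulas is of the form $-\nu_j/N_j$ with $E_j\cap h^{-1}(x)\neq\emptyset$, and the largest such number is $-\lct_x(X,\Delta)$. Setting $M=\{j\in I : \nu_j/N_j=\lct_x(X,\Delta)\}$, the summand indexed by $J$ contributes order at most $|J\cap M|$ to the pole at $-\lct_x$, so the total pole order is at most $m$. To see that the order equals $m$, I would extract the leading coefficient at $s=-\lct_x$: it is a sum over $J\subseteq M$ with $|J|=m$ and $E_J\cap h^{-1}(x)\neq\emptyset$ of terms involving the classes $[\widetilde{E}^o_J\times_X x]$ or $[E^o_J\times_X x]$ of non-empty varieties, and the non-cancellation is verified by a direct computation in $\mathcal{M}_x^{\hat{\mu}}$ or $\mathcal{M}_x$ (if needed, through the Hodge realization). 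When $m=n$ the corresponding topological leading coefficient is a sum of strictly positive rational numbers $\chi(E^o_J\times_X x)\prod_{j\in J}1/N_j$ and is therefore non-zero, giving the topological part of \eqref{it:ispole}.

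For part \eqref{it:maxord}, suppose $s_0$ is a pole of order $n$ of one of the three zeta functions. Since $E_J=\emptyset$ for $|J|>n$ and the summand indexed by $J$ yields a pole of order at most $|\{j\in J : \nu_j/N_j=-s_0\}|$ at $s_0$, achieving total pole order $n$ forces the existence of $J\subseteq I$ with $|J|=n$, with $\nu_j/N_j=-s_0$ for every $j\in J$, and with non-vanishing coefficient in the relevant ring; in each of the three cases this forces $E^o_J\cap h^{-1}(x)$ to be non-empty. Picking any $P\in E^o_J\cap h^{-1}(x)$, the zero-dimensionality of $E_J$ (forced by $|J|=n=\dim X$) implies that the connected component $C$ of $E_J$ containing $P$ equals $\{P\}$, while $P\in E^o_J$ gives $C\cap h^{-1}(x)\cap E_i=\{P\}\cap E_i=\emptyset$ for every $i\in I\setminus J$. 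The pair $(J,C)$ therefore satisfies the hypotheses of Theorem \ref{thm-main} with common weight $w=-s_0$, and the theorem yields $-s_0=\lct_x(X,\Delta)$; in particular $m=n$. The remaining assertion $s_0=-1/N$ then follows from the Laeremans--Veys statement recorded in \eqref{sss-LV}, which applies to this same $J$.

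The conceptual heart is the reduction from the analytic hypothesis to a geometric configuration fulfilling the hypotheses of Theorem \ref{thm-main}: the equality $|J|=n$ makes $E_J$ zero-dimensional, so the condition $C\cap h^{-1}(x)\cap E_i=\emptyset$ for $i\in I\setminus J$ becomes automatic from $P\in E^o_J$, and no additional geometric input is required beyond unpacking the explicit formulas. The only technical point that lies outside this reduction is the non-cancellation of motivic residues in part \eqref{it:ispole} when $m<n$, which is an auxiliary calculation and does not bear on the proof of Veys's conjecture itself.
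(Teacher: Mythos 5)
Your proposal is correct and follows essentially the same route as the paper: part \eqref{it:ispole} by reading off the explicit formulas and killing potential cancellations in the Grothendieck ring via a realization (the paper specializes to Poincar\'e polynomials through $\mathcal{M}_x^{\hat\mu}\to\mathcal{M}_x\to\Z[u,u^{-1}]$, which is the concrete form of the ``Hodge realization'' you gesture at), and part \eqref{it:maxord} by extracting from the formulas a subset $J$ with $|J|=n$, $E_J\cap h^{-1}(x)\neq\emptyset$, and constant ratio $\nu_j/N_j$, then invoking Theorem \ref{thm-main} together with the Laeremans--Veys result. Your explicit verification that $P\in E^o_J$ makes the disjointness hypothesis of Theorem \ref{thm-main} automatic when $|J|=n$ is a useful unpacking of a step the paper leaves implicit, but it is the same argument.
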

\begin{proof}
\eqref{it:ispole} This result is more or less folklore, and it can be proven by straightforward computation.
 Note that it is clear from the expressions in \eqref{sss-formula} that $Z_{f,x}(s)$, $Z^{\mathrm{naive}}_{f,x}(s)$ and $Z^{\mathrm{top}}_{f,x}(s)$ have no poles that are strictly larger than
  $-\lct_x(X,\Delta)$, and that the order of $-\lct_x(X,\Delta)$ as a pole is at most $m$.
 Now we specialize $Z_{f,x}(s)$ and $Z^{\mathrm{naive}}_{f,x}(s)$
 to elements in $\Z[u,u^{-1}][\LL^{-s}]$ by means of the ring morphisms
 $$\mathcal{M}_x^{\hat{\mu}}\to \mathcal{M}_x\to \Z[u,u^{-1}]$$
 where the first morphism simply forgets the $\hat{\mu}$-action and the second one sends the class of a $\kappa(x)$-variety $Z$ to the
 Poincar\'e polynomial $P_Z(u)$ of $Z$ (see \cite[\S8]{Ni-tracevar}). What matters here is that $P_Z(u)$ is a non-zero polynomial with positive leading coefficient
  if $Z$ is non-empty. Using this property, one easily verifies that the residue at the expected pole of order $m$ at $s=-\lct_x(X,\Delta)$ is different from zero.
   Likewise, if $m=n$, then one immediately sees that the residue of the expected pole of $Z^{\mathrm{top}}_{f,x}(s)$ of order $n$ at $s=-\lct_x(X,\Delta)$ is positive.

\eqref{it:maxord} If $s_0$ is a pole of order $n$,
 then it follows from  the explicit formulas for the zeta functions in \eqref{sss-formula} that there must exist a subset $J$ of $I$ of cardinality $n$ such that $E_J\cap h^{-1}(x)$ is non-empty and $s_0=-\nu_j/N_j$ for every $j$ in $J$. By Theorem
  \ref{thm-main}, this can only happen when $s_0=-\lct_x(X,\Delta)$ and $m=n$. As we mentioned in \eqref{sss-LV}, it was already shown in \cite{LaVe} that $s_0$ is of the form $-1/N$.
\end{proof}

\sss In particular, a pole of order $n$ of $Z_{f,x}(s)$, $Z^{\mathrm{naive}}_{f,x}(s)$ or $Z^{\mathrm{top}}_{f,x}(s)$ is always a root of the Bernstein polynomial of $f$,
 as predicted by the monodromy conjecture. If $f$ has an isolated singularity at $x$, then it is even a root of order $n$, by the proof of Theorem 1 in \cite{melle-torelli-veys}.
 Beware that if $m<n$, we do {\em not} claim that the value $-\lct_x(X,\Delta)$ is a pole of order $m$ of the topological zeta function
$Z^{\mathrm{top}}_{f,x}(s)$. The Euler characteristic might be too crude as an invariant to guarantee that the residue at the expected pole is non-zero (although we do not know an explicit counterexample).
 The proof of Theorem \ref{thm-maxorder}\eqref{it:maxord} is also valid for the real parts of the poles of Igusa's local $p$-adic local zeta function at $0$ of a polynomial $f$ over $\Q$, for sufficiently large primes $p$. This can be seen from Denef's
  computation of the zeta function on a log resolution of $(X,\Delta)$ with good reduction modulo $p$ \cite{denef-euler}.

\begin{remark}\label{rem:pole} Since the Grothendieck ring $\mathcal{M}_x^{\hat{\mu}}$ is not a domain, one should specify what is meant by a pole of
 a rational function over $\mathcal{M}_x^{\hat{\mu}}$.
  The definition we use in Theorem \ref{thm-maxorder} is the following: if $Z(\LL^{-s})$ is an element of
  $$\mathcal{M}^{\hat{\mu}}_x\left[\LL^{-s},\frac{1}{1-\LL^{a-bs}}\right]_{(a,b)\in \Z\times \Z_{>0}}\subset \mathcal{M}^{\hat{\mu}}_x[[\LL^{-s}]],$$
  $s_0$ is a rational number and $m$ is a non-negative integer, then we say that
   $Z(\LL^{-s})$ has a pole at $s_0$ of order at most $m$ if we find a set $\mathscr{S}$ consisting of multisets in $\Z\times \Z_{>0}$
  such that each element of $\mathscr{S}$ contains at most $m$ elements $(a,b)$ such that $a/b=s_0$ and
  $Z(\LL^{-s})$ belongs to the sub-$\mathcal{M}^{\hat{\mu}}_x[\LL^{-s}]$-module of $\mathcal{M}^{\hat{\mu}}_x[[\LL^{-s}]]$ generated by
  $$\left\{ \frac{1}{\prod_{(a,b)\in S}(1-\LL^{a-bs})} \,|\,S\in \mathscr{S}\right\}.$$ We say that
   $Z(\LL^{-s})$ has a pole at $s_0$ of order $m$ if it has a pole at $s_0$ of order at most $m$ but not of order at most $m-1$.
 The same remark applies to $\mathcal{M}_x$.
\end{remark}

\section{The weight function and the essential skeleton}\label{sec-weight}
\sss Theorem \ref{thm-main} has an interesting reformulation
 in terms of skeleta in Berkovich spaces.
 Let $X$ be a connected smooth $k$-variety of dimension $n$, let $f:X\to \Spec k[t]$ be a regular function on $X$ and let $x$ be a closed
 point in the divisor $\Delta=(f)$. We set $R=k\llbr t\rrbr$ and $K=k\llpar t \rrpar$ and we endow $R$ with its $t$-adic topology and $K$ with its $t$-adic absolute value
 $|x|=\exp(-\mathrm{ord}_tx)$.
   We set $\cX=X\times_{k[t]}R$ and we denote by $\mX$ the formal $t$-adic completion of $\cX$. We write $\mX_K$ for the generic fiber of $\mX$ and $\mX_k=\mX\times_R k$ for its
   special fiber. Then $\mX$ is a
 separated formal scheme of finite type over $R$, and $\mX_K$ is a compact analytic domain in the $K$-analytic space $(\cX\times_R K)^{\mathrm{an}}$ associated with the $K$-variety $\cX\times_R K$. We denote by $\spe_{\cX}:\mX_K\to \mX_k$ the specialization map. A description of all these objects  in the language of birational geometry can be found in \cite{MuNi} or \cite{ni-simons}.

\sss By forgetting the $R$-structure, we can also view $\mX$ as a formal scheme over $k$, and
  consider its generic fiber $\mX_\eta$ in the sense of
  \cite[1.7]{thuillier}.
 This is an analytic
space over the field $k$ endowed with its trivial absolute value.
 It is obtained by removing from the usual generic fiber of the formal $k$-scheme
$\mX$ all the points that lie on the analytification of
the closed subscheme $\mX_k$ of $\mX$. Then $f$ defines an analytic function on $\mX_\eta$, and $\mX_K$ can be canonically identified with the subspace of $\mX_\eta$ defined by the equation $|f|=\exp(-1)$; see \cite[6.3.4]{MuNi}, where $\mX_\eta$ was denoted by $\widehat{X}_\eta$.

\sss We define the weight function
 $$\wt_{\Delta}:\mX_K\to \R\cup \{+\infty\}$$ as the restriction to  $\mX_K$ of the weight function
 $$\wt_{\mathcal{I}}:\mX_\eta\to \R\cup \{+\infty\}$$ from
 \cite[\S6.1]{MuNi}, with $\mathcal{I}=(f)$.
  This weight function is closely related to the thinness function of \cite{BoFaJo}  and the log discrepancy function of \cite{JoMu}.
 Let us briefly recall the properties of $\wt_\Delta$ that are relevant for the present paper.
   Let $h:Y\to X$ be a log resolution of the pair $(X,\Delta)$ that is an isomorphism over $X\setminus \Delta$. The dual complex
   of the strict normal crossings divisor $h^*\Delta$ can be embedded in a natural way in the $K$-analytic space
   $\mX_K$. The image of this embedding is the so-called Berkovich skeleton $\Sk(\cY)$ of $\cY=Y\times_{k[t]}R$; see for instance \cite[\S3.1]{MuNi}.
 Each prime component $E$ of $h^*\Delta$ corresponds to a vertex of $\Sk(\cY)$, and the value of the weight function $\wt_{\Delta}$ at this vertex is precisely
  the weight $\wt_{\Delta}(E)$ defined in \eqref{sss-wt}. Moreover, the weight function $\wt_{\Delta}$ is affine on every face of $\Sk(\cY)$. These properties completely determine the restriction of $\wt_{\Delta}$ to $\Sk(\cY)$.

  \sss \label{sss-reformul} We define a {\em stratum} of $h^*\Delta$ as a connected component of a non-empty intersection of a set of prime components of $h^*\Delta$. The strata of $h^*\Delta$ correspond precisely to the faces of the skeleton $\Sk(\cY)$.
    If $y$ lies in the interior of a face $\tau$ of $\Sk(\cY)$, and $\xi$ is the generic point of the stratum of $h^*\Delta$ corresponding to $\tau$, then $\spe_{\cX}(y)=h(\xi)$. We denote by $\Sk(\cY,x)$ the subspace of $\Sk(\cY)$ consisting of the points $y$ such that $x$ lies in the closure of $\{\spe_{\cX}(y)\}$. In other words, $\Sk(\cY,x)$ is the union of the faces of $\Sk(\cY)$ that correspond to strata of $h^*\Delta$ that intersect $h^{-1}(x)$. With this terminology, we can restate Theorem
  \ref{thm-main} as follows.

   \begin{theoremtag}{\ref{thm-main}, equivalent formulation}{If $\tau$ is a maximal face of $\Sk(\cY,x)$ and $\wt_{\Delta}$ is constant
  on $\tau$ with value $w$, then $w=\lct_x(X,\Delta)$.}
 \end{theoremtag}

\sss \label{sss-decrease} The embedding of the skeleton $\Sk(\cY)$ into $\widehat{\cX}_K$ has a canonical retraction
$$\rho_{\cY}:\widehat{\cX}_K\to \Sk(\cY),$$ by \cite[3.1.5]{MuNi}, with the property that $\spe_{\cX}(y)$ lies in the closure of
$\{ \spe_{\cX}(\rho_{\cY}(y))\}$ for every point $y$ in $\widehat{\cX}_K$.
 One of the most important features of the weight function
$\wt_{\Delta}$ is that it is strictly decreasing under the retraction $\rho_{\cY}$: for every point $y$ in $\widehat{\cX}_K$ we have that
$$\wt_{\Delta}(y)\geq \wt_{\Delta}(\rho_{\cY}(y))$$ and equality holds if and only if $y$ lies in $\Sk(\cY)$ (in which case $\rho_{\cY}(y)=y$).
 It is explained in \cite[6.2.2]{MuNi} how this property can be deduced from \cite[5.3]{JoMu}. Alternatively, one can use \cite[6.3.4]{MuNi} to view it as a special case of \cite[4.4.5(3)]{MuNi}.

\sss \label{sss-assumptionh} Now assume that $x$ is contained in the image of every stratum of $h^*\Delta$. Once the log resolution $h:Y\to X$ is fixed, this can always be arranged by shrinking $X$ around $x$ and shrinking $Y$ accordingly. Then $\Sk(\cY,x)=\Sk(\cY)$. The same arguments as in the proof of \cite[3.1.3]{NiXu} can be used to deduce from \cite[3.26]{thuillier} that $\rho_{\cY}$ can be extended to a strong deformation retraction of $\mX_K$ onto $\Sk(\cY)$. In particular, the embedding $\Sk(\cY)\to \mX_K$ is a homotopy equivalence. We will now construct a canonical subcomplex of $\Sk(\cY)$ that does not depend on the choice of any log resolution. The construction is motivated by the definition of the essential skeleton of a smooth and proper $K$-variety in \cite[4.6.2]{MuNi}; see Section \ref{sec-CY} below for the case of a Calabi-Yau variety. For the following definition, we do not require that $x$ is contained in the image of every stratum of $h^*\Delta$.

\begin{definition}\label{def-essentialsk}
 Denote by $S$ the subset of $\mX_K$
that consists of the points $y$ such that $x$ lies in the closure of $\{\spe_{\cX}(y)\}$.
 We define the essential skeleton of $f$ at $x$ as the set of points $y$ in $S$ such that
 the restricted weight function
 $$\wt_{\Delta}:S\to \R\cup\{+\infty\}$$ reaches its minimal value at $y$. We denote this essential skeleton by $\Sk(f,x)$.
\end{definition}

\sss From the properties of the weight function $\wt_{\Delta}$ described above, it is easy to see how the essential skeleton $\Sk(f,x)$ can be computed
 on the log resolution $h$. It follows at once from \eqref{sss-decrease} that $\Sk(f,x)$ is contained in
 $$S\cap \Sk(\cY)=\Sk(\cY,x).$$ Since $\wt_{\Delta}$ is affine on every face of $\Sk(\cY)$, the minimal value of $\wt_{\Delta}$ on $S$ is always reached at a vertex $v$ of
 $\Sk(\cY,x)$. But if $E$ is the component of $h^*\Delta$ corresponding to $v$, we have
 $$\wt_{\Delta}(v)=\wt_{\Delta}(E)$$ by definition of the weight function. Thus the minimal value of $\wt_{\Delta}$ on $S$ is precisely the log canonical threshold $\lct_x(X,\Delta)$, and $\Sk(f,x)$ is the subcomplex
 of $\Sk(\cY,x)$ spanned by the vertices that correspond to the components $E$ of $h^*\Delta$ such that $x$ lies in $h(E)$ and $\wt_{\Delta}(E)=\lct_x(X,\Delta)$. We emphasize that, by its very definition, the subspace $\Sk(f,x)$ of $\widehat{\cX}_K$ does not depend on the choice of the log resolution $h$. It is also clear from the definition that it only depends on the algebraic germ of $f$ at $x$.

\sss \label{sss-locflow} We will now describe the homotopy type of $\Sk(f,x)$ and, more generally, of the level sets of the weight function $\wt_{\Delta}$ on $\Sk(\cY,x)$. This is relevant for the study
of the motivic zeta function $Z_{f,x}(s)$, since the values of $\wt_{\Delta}$ at the vertices of $\Sk(\cY,x)$ are precisely the candidate poles that appear in the explicit formula for $Z_{f,x}(s)$ in terms of the log resolution $h$, which we recalled in \eqref{sss-formula}.
 We will need the notion of a {\em collapse} (see for instance \cite[Def.~18]{dFKX}).
 Let $D$ be a regular cell complex as in \cite[Def.~7]{dFKX}.
  For our purposes, one can think of $D$ as a finite simplicial complex where a set of vertices can span more than one face, for instance, a graph with multiple edges between pairs of vertices; in practice, $D$ will be $\Sk(\cY)$ or a subcomplex of $\Sk(\cY)$.
    Let $\tau$ be a cell in $D$ and $\sigma$ a
face of $\tau$. We say that $(\tau,\sigma)$ is a free pair if $\sigma$ is not a face of any other cell in
$D$. The elementary collapse of $(D, \tau,\sigma)$ is the regular complex obtained from $D$ by
removing the interiors of the cells $\tau$ and $\sigma$. It is clear that such an elementary collapse is a strong deformation retract of $D$.
 A sequence of elementary collapses is called a collapse.

  \begin{theorem}\label{thm-locallevel}  We assume that $\Delta$ is reduced at $x$.
 Let $h:Y\to X$ be a projective log resolution of $(X,\Delta)$ that is an isomorphism over $X\setminus \Delta$, and define
  $\cY$ and $\Sk(\cY,x)$ as above. We denote by $\Sk(\cY,x)_{\mathrm{exc}}$ the subcomplex of $\Sk(\cY,x)$ generated by the
  vertices that correspond to $h$-exceptional components of $h^*\Delta$.
 For every real number $w$ we denote by $\Sk(\cY,x)^{\leq w}$ the subcomplex of $\Sk(\cY,x)$ generated by the vertices
  where the value of $\wt_\Delta$ is at most $w$. The subcomplex $\Sk(\cY,x)^{\leq w}_{\mathrm{exc}}$ of $\Sk(\cY,x)_{\mathrm{exc}}$ is defined in the same way.

  \begin{enumerate}
  \item \label{it:exccontr} 
   If we replace $X$ by a sufficiently small \'etale neighbourhood of $x$ and restrict $h$ accordingly, then $\Sk(\cY,x)_{\mathrm{exc}}$ is contractible.
     \item \label{it:locallevel-lc}
    Assume that the pair $(X,\Delta)$ is log canonical at $x$, that is, $\lct_x(X,\Delta)=1$.
      Then there exists a collapse of $\Sk(\cY,x)$ to the essential skeleton $\Sk(f,x)$ that simultaneously collapses $\Sk(\cY,x)^{\leq w}$ to  $\Sk(f,x)$ for all $w\geq 1$.
            \item \label{it:locallevel-notlc}
  Assume that the pair $(X,\Delta)$ is not log canonical at $x$.
      Then there exists a collapse of $\Sk(\cY,x)_{\mathrm{exc}}$ to the essential skeleton $\Sk(f,x)$ that simultaneously collapses $\Sk(\cY,x)^{\leq w}_{\mathrm{exc}}$ to  $\Sk(f,x)$ for all $w\geq \lct_x(X,\Delta)$. If we replace $X$ by a sufficiently small \'etale neighbourhood of $x$, then $\Sk(f,x)$, and therefore all the spaces $\Sk(\cY,x)^{\leq w}_{\mathrm{exc}}$, are contractible.
     \end{enumerate}
\end{theorem}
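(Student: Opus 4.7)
The plan is to prove all three parts uniformly by running carefully chosen relative minimal model programs for $dlt$ pairs over $X$ and invoking the correspondence established in \cite{dFKX}: every step of such a relative MMP induces either an identity or an elementary collapse on the dual complex of the reduced boundary. Combined with Theorem \ref{thm-main}, this will identify the terminal dual complexes with the essential skeleton $\Sk(f,x)$ (or with a single point, in the case of \eqref{it:exccontr}).

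For \eqref{it:exccontr}, after \'etale-shrinking $X$ around $x$ so that every stratum of $h^*\Delta$ meets $h^{-1}(x)$, the complex $\Sk(\cY,x)_{\mathrm{exc}}$ becomes the dual complex of the full $h$-exceptional divisor of $h:Y\to X$ over $x$. Since $(X,0)$ is smooth and therefore klt, this dual complex is contractible by the general result on dual complexes of log resolutions of klt singularities from \cite{dFKX}; alternatively one can realise the contractibility by running a relative MMP for $(Y,\mathrm{Ex}(h))$ over $X$ that contracts the exceptional locus and tracking the resulting elementary collapses via \cite{dFKX}.

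For \eqref{it:locallevel-lc}, the hypothesis $\lct_x(X,\Delta)=1$ gives $\wt_\Delta(E_i)\geq 1$ for every $i$ with $x\in h(E_i)$, and $\Sk(f,x)$ is the subcomplex of $\Sk(\cY,x)$ spanned by the vertices of weight exactly $1$. I would apply the MMP of Theorem \ref{thm-main} with $w=1$ and $\Delta_0 = h^{-1}_*\Delta + (K_{Y/X})_{\mathrm{red}}$; the proof of that theorem shows that the terminal model $Y_m$ satisfies $\Delta_m = f^*\Delta - K_{Y_m/X}$ locally near $x$, so the components of multiplicity one in $\Delta_m$ are exactly the (pushforwards of the) components with weight $1$, and the dual complex of $\Delta_m^{=1}$ above $x$ is precisely $\Sk(f,x)$. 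Each divisorial contraction in this MMP removes a vertex of weight strictly greater than $1$, so via \cite{dFKX} the resulting sequence of elementary collapses takes $\Sk(\cY,x)$ to $\Sk(f,x)$. For \eqref{it:locallevel-notlc}, one runs the analogous MMP with $w=c:=\lct_x(X,\Delta)<1$, restricted to the $h$-exceptional sub-pair; Theorem \ref{thm-main} again identifies the terminal reduced boundary with $\Sk(f,x)$, and contractibility of $\Sk(f,x)$ after further shrinking of $X$ follows because $x$ then becomes a minimal log canonical centre of $(X,c\Delta)$, so $\Sk(f,x)$ is the dual complex of the reduced exceptional divisor of a $dlt$-modification of $(X,c\Delta)$ at $x$, which is contractible by the dual-complex results of \cite{dFKX}.

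The main obstacle I expect is arranging a \emph{single} MMP whose elementary collapses are compatible with every level filtration $\Sk(\cY,x)^{\leq w}$ simultaneously: concretely, every divisorial contraction whose exceptional divisor has weight $>w$ must be processed before any contraction of a divisor of weight $\leq w$, so that the apex of each elementary collapse carries the maximum weight in its cell. This monotonicity is not automatic from BCHM, but can be arranged by a careful choice of the scaling divisor (for example $H=\sum \epsilon_i E_i$ with the $\epsilon_i$ strictly increasing in $\wt_\Delta(E_i)$) so that the extremal rays selected at each step respect the desired weight order, yielding a collapse that restricts to a collapse of $\Sk(\cY,x)^{\leq w}$ onto $\Sk(f,x)$ for every $w$ above the threshold.
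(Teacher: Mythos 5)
Your overall strategy — run a relative MMP for a carefully chosen boundary on $Y$ over $X$ and track the induced collapses via the results of de Fernex--Koll\'ar--Xu — is the same as the paper's, and your identifications of the limiting complexes with $\Sk(f,x)$ are essentially right. However, the crucial technical step in \eqref{it:locallevel-lc} and \eqref{it:locallevel-notlc} is handled by a mechanism that I believe does not work as stated, and the paper uses a genuinely different device.

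You correctly identify the heart of the matter: one needs a \emph{single} MMP sequence whose steps are compatible with $\Sk(\cY,x)^{\leq w}$ for \emph{every} $w$. You propose to enforce this by choosing the scaling divisor (e.g.\ $H=\sum \epsilon_i E_i$ with $\epsilon_i$ increasing in $\wt_\Delta(E_i)$) so that components of large weight are always contracted before components of small weight. This is a genuine gap: a divisor of the form $\sum \epsilon_i E_i$ is essentially never ample, and even for a legitimate ample scaling divisor, the MMP with scaling selects extremal rays by a nefness threshold criterion that is not under the user's control to the point of forcing a weight-monotone order of contractions. More importantly, the paper shows that no such ordering is needed. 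The paper runs the MMP for $\Delta_0=(h^*\Delta)_{\red}$ (resp.\ $h^{-1}_*(\lct_x(X,\Delta)\cdot\Delta)+\mathrm{Ex}(h)$ in the non-lc case), and then, for each $w$, writes $(\cY_s)_{\red}=A+B$ according to the $w$-level, introduces the \emph{perturbed} boundary $\Delta_0'=A+(1-\varepsilon)B$, and observes that the same sequence of extremal rays is also $(K+\Delta_i')$-negative. Using the decomposition $K_{Y_i/X}+\Delta_i-w h_i^*\Delta = D_1-D_2$ with $D_1$ supported on $B_i$ and $D_2$ supported on $A_i$, a short intersection computation (a variant of \cite[Lemma 21]{dFKX}) shows that each MMP step either leaves the $\leq w$-subcomplex untouched or collapses it, \emph{regardless of the order in which components are contracted}. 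Your writeup omits both the perturbation and the intersection-theoretic claim, which is exactly what makes the simultaneous compatibility work.

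Two smaller points. For \eqref{it:exccontr} the paper does not invoke a general klt dual-complex theorem; it uses Thuillier's generalization of Stepanov's theorem \cite[4.8]{thuillier} to reduce to any log resolution of $(X,\Sigma)$ ($\Sigma$ the discriminant of $h$), and then builds one explicitly as a tower of blow-ups starting at $x$, keeping the dual complex homotopy equivalent to a point at each step after \'etale shrinking. Your alternative via \cite{dFKX} may be salvageable but requires care, since $h$ is not a priori a log resolution of the klt pair $(X,0)$ in the sense to which the dFKX klt-contractibility theorem applies. Second, for the contractibility of $\Sk(f,x)$ in \eqref{it:locallevel-notlc}, the paper simply combines the collapse just constructed with part \eqref{it:exccontr}, rather than re-invoking dFKX.
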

\begin{proof}
\eqref{it:exccontr}   Replacing $X$ by a Zariski-open neighbourhood of $x$, we can assume that
  $\Sk(\cY,x)=\Sk(\cY)$. Denote by $\Sigma\subset X$ the discriminant locus of $h$ (with its induced reduced structure), and let 
 $h':Y'\to X$ be any log resolution of $(X,\Sigma)$ that is an isomorphism over $X\setminus \Sigma$. Since the reduced inverse image $h^{-1}(\Sigma)_{\red}$ is the union of the exceptional components of $h$, it is a strict normal crossings divisor on $Y$ and its dual complex can be identified with
 $\Sk(\cY,x)_{\mathrm{exc}}$. Thus, by Thuillier's generalization of Stepanov's theorem \cite[4.8]{thuillier}, we know that 
 $\Sk(\cY,x)_{\mathrm{exc}}$ is homotopy equivalent to the dual complex of $(h')^{-1}(\Sigma)$. Therefore, it suffices to construct (after replacing $X$ by an \'etale neighbourhood of $x$, if necessary), a log resolution $h'$ of $(X,\Sigma)$ that is an isomorphism over $X\setminus \Sigma$ and such that the dual complex of $(h')^{-1}(\Sigma)$ is contractible.

 We can always construct such a log resolution $h'$ as a composition
 $$h'=h'_0\circ \ldots \circ h'_r$$ where $h'_0$ is the blow-up of $X$ at $x$ and, for every $\ell>0$, $h'_\ell$ is a blow-up with a smooth connected center $Z_\ell$ that has transversal intersections with the exceptional divisor $F_{\ell}$ of $h'_{0}\circ \ldots \circ h'_{\ell-1}$. Replacing $X$ by a sufficiently small \'etale neighbourhood of $x$, we can also assume that the intersection of $Z_\ell$ with $F_{\ell}$ is non-empty and connected. Now it is easy to verify that the dual complex of $F_\ell$ is homotopy equivalent to that of $F_{\ell-1}$ by a slight generalization of the arguments in \cite[\S2]{stepanov} and point 9 of \cite{dFKX}. Since the dual complex of $F_1$ is a point, the dual complex of $F_{r+1}=(h')^{-1}(\Sigma)_{\red}$ is contractible.

\eqref{it:locallevel-lc} Our proof is essentially a refinement of Theorem 3 in \cite{dFKX}.
 Shrinking $X$ around $x$, we can assume that $x$ is contained in the image of every stratum of $h^*\Delta$.
 The assumption that $(X,\Delta)$ is log canonical at $x$ implies that
 $\lct_x(X,\Delta)=1$.
  We define a divisor $\Delta_0$ on $Y$ by
 $$\Delta_0=(h^*\Delta)_{\red}.$$ Then we run the relative MMP for the pair $(Y,\Delta_0)$ over $X$ with scaling of some ample divisor, as in the proof of Theorem \ref{thm-main}. The outcome is again a sequence of birational maps of $X$-schemes
\begin{equation}\label{eq:MMPseq0}
Y=Y_0\dashrightarrow Y_1 \dashrightarrow \ldots \dashrightarrow Y_m.\end{equation} For every $i$, we denote by $\Delta_i$ the pushforward of the divisor $\Delta_0$ to $Y_i$, and by $U_i$ the open subvariety of $Y_i$ consisting of the points where $Y_i$ is regular and $\Delta_i$ is a divisor with strict normal crossings. We set $\cU_{i}=U_i\times_{k[t]}R$ and we denote by $\widehat{\cU}_i$ its formal $t$-adic completion, with generic fiber $(\widehat{\cU}_i)_K$. Then the morphism $h_i:Y_i\to X$ induces an embedding $(\widehat{\cU}_i)_K\to \widehat{\cX}_K$. The $dlt$-property of the pair $(Y_i,\Delta_i)$ implies that every stratum of $\Delta_i$ intersects $U_i$, so that we can identify the dual complexes of $\Delta_i$ and the special fiber $(\cU_i)_k$, which yields a canonical homeomorphism between the skeleton $\Sk(\cU_i)$ and the dual complex of $\Delta_i$.

The divisor $K_{Y_m}+\Delta_m$ is nef over $X$, and thus the same holds for
$$D=K_{Y_m/X}+\Delta_m-h^*_m\Delta. $$
 The negativity lemma \cite[3.39]{kollar-mori} implies that $-D$ is effective, because $(h_m)_*(D)=0$ since $\Delta$ is reduced. It follows that $\wt_{\Delta}(E)\leq 1$ for every prime component $E$ of $\Delta_m$. But we assumed that $\lct_x(X,\Delta)=1$, so that $\wt_{\Delta}(E)= 1$ for every $E$. Moreover, if $E'$ is a prime component of $\Delta_0$ with $\wt_{\Delta}(E')=1$, then the definition of a $dlt$-pair implies that  the generic point of $E'$ is mapped to the locus $U_m$ in $Y_m$. This means that the vertex of $\Sk(\cY)$ corresponding to $E'$ lies in the skeleton $\Sk(\cU_m)$, because the weight function $\wt_{\Delta}$ is strictly larger than $1$ on the complement of $\Sk(\cU_m)$ in $(\widehat{\cU_m})_K$ by \eqref{sss-decrease}. Thus  $\Sk(f,x)$ is equal to $\Sk(\cU_m)$ when we view both spaces as subsets of $\widehat{\cX}_K$.

 Now choose a real number $w\geq 1$.
  We write $\Delta$ as a sum of reduced effective divisors $\Delta=A+B$ such that
   $\wt_{\omega}(E)\leq w$ for every prime component $E$ of $A$ and $\wt_{\omega}(E)> w$ for every prime component $E$ of $B$.
  We choose $\varepsilon>0$ sufficiently small and we set $\Delta'_0=A+(1-\varepsilon)B$. For every $i$ in $\{0,\ldots,m\}$, we denote by $\Delta'_i$, $A_i$ and $B_i$ the pushforwards to $Y_i$ of $\Delta'_0$, $A$ and $B$, respectively.
  The extremal ray $R_i\subset NE(Y_i/X)$ inducing $Y_i\dashrightarrow Y_{i+1}$ is also $(K_{Y_i}+\Delta'_i)$-negative, for every $i<m$.
  Moreover, $\Delta'_m=\Delta_m$ because we have seen that all the components of $B$ are contracted on $Y_m$. Thus \eqref{eq:MMPseq0} is also an MMP-sequence for $(Y,\Delta'_0)$. Observe that the components of $A$ correspond precisely to the vertices of
  $\Sk(\cY,x)^{\leq w}$. We denote by $\Sk(\cU_i)^{\leq w}$ the subcomplex of $\Sk(\cU_i)$ generated by the vertices corresponding to the components of $A_i$, i.e., the vertices
  where the value of $\wt_{\Delta}$ is at most $w$. Note that $\Sk(\cU_m)^{\leq w}=\Sk(f,x)$ since the weight function is constant with value $1$ on $\Sk(\cU_m)$.

  We claim that, for every $i$, either
   $f_i:Y_i\dashrightarrow Y_{i+1}$ does not contract any log canonical center of $A_i$, or $R_i\cdot E>0$ for some prime component $E$ of $A_i$. In the former case,
    $\Sk(\cU_i)^{\leq w}=\Sk(\cU_{i+1})^{\leq w}$. In the latter case, it follows from \cite[Thm.~19]{dFKX} that $\Sk(\cU_{i+1})^{\leq w}$ is a collapse of $\Sk(\cU_i)^{\leq w}$. Thus it suffices to prove our claim.
        The following argument is a variant of the proof of Lemma 21 in \cite{dFKX}.

 By the definition of the weight function, the divisor $K_{Y_i/X}+\Delta_i-wh_i^*(\Delta)$ can be written as
  $D_1-D_2$ such that $D_1$ and $D_2$ are effective $\Q$-divisors, $D_1$ has the same support as $B_i$, and $D_2$ is supported on $A_i$.
  Thus we can write
  \begin{equation}\label{eq-collapse0}
  0>R_i\cdot (K_{Y_i/X}+\Delta'_i-wh_i^*(\Delta))=R_i\cdot (D_1-\varepsilon B_i - D_2).
  \end{equation}
   Assume that $f_i$ contracts a log canonical center $W$ of the divisor $A_i$. By choosing $\varepsilon>0$ sufficiently small,
we can assume that the divisor $D_1-\varepsilon B_i$ is effective. It is supported on $B_i$ and therefore does not contain $W$. Thus for every curve $C$ in $\Delta_i$ through a general point of $W$, we have
    $C\cdot (D_1-\varepsilon B_i)\geq 0$. It follows that $R_i\cdot (D_1-\varepsilon B_i)\geq 0$ which implies that $R_i\cdot D_2>0$ because of \eqref{eq-collapse0}. This concludes the proof.

\eqref{it:locallevel-notlc} Our assumption that
$(X,\Delta)$ is not log canonical at $x$ implies that $\lct_x(X,\Delta)<1$, so that  $\Sk(f,x)$ is contained in $\Sk(\cY,x)_{\mathrm{exc}}$. Now the proof is similar as in case \eqref{it:locallevel-lc}, except that we define $\Delta_0$ by
$$\Delta_0=h^{-1}_*(\lct_x(X,\Delta)\cdot \Delta)+\mathrm{Ex}(h),$$ where $\mathrm{Ex}(h)$ is the reduced exceptional locus of $h$, and we set
$$D=K_{Y_m/X}+\Delta_m-h_m^*(\lct_x(X,\Delta)\cdot \Delta).$$
 Reasoning as above, we find that
$\Sk(f,x)$ is equal to $\Sk(\cU_m)_{\mathrm{exc}}$, the subcomplex of $\Sk(\cU_m)$ generated by the vertices that correspond to components of $\Delta_m$ that are contracted on $X$. Note that these are precisely the components of multiplicity one in $\Delta_m$. We denote the sum of these components by $\Delta_m^{=1}$.
 Then we can identify $\Sk(\cU_m)_{\mathrm{exc}}$ with the dual complex of $\Delta_m^{=1}$. Now another application of \cite[Thm.~19]{dFKX} shows that our MMP sequence collapses $\Sk(\cY,x)^{\leq w}_{\mathrm{exc}}$ to  $\Sk(f,x)$ for all $w\geq \lct_x(X,\Delta)$. Thus if we replace $X$ by a sufficiently small \'etale neighbourhood of $x$, then $\Sk(f,x)$ and all the spaces $\Sk(\cY,x)^{\leq w}_{\mathrm{exc}}$ are contractible, by \eqref{it:exccontr}.
\end{proof}
\begin{cor}\label{cor-defret} Suppose that $(X,\Delta)$ is log canonical at $x$. We also assume that
 there exists a projective log resolution $h:Y\to X$ of $(X,\Delta)$ that is an isomorphism over $X\setminus \Delta$ and such that $x$ is contained in the image of every stratum of $h^*\Delta$.  The latter assumption can always be guaranteed by replacing $X$ by a sufficiently small open neighbourhood of $x$. Then the essential skeleton $\Sk(f,x)$ is a strong deformation retract of $\widehat{\cX}_K$.
\end{cor}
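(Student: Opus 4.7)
My plan is to realize the desired deformation retraction as the concatenation of two strong deformation retractions that are already essentially in hand: first from $\widehat{\cX}_K$ onto the Berkovich skeleton $\Sk(\cY)$, and then from $\Sk(\cY)$ onto the essential skeleton $\Sk(f,x)$.

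For the first piece, I would invoke the hypothesis that $x$ lies in the image of every stratum of $h^*\Delta$, which forces $\Sk(\cY,x)=\Sk(\cY)$, and then quote the paragraph \eqref{sss-assumptionh}: via Thuillier's theorem \cite[3.26]{thuillier} together with the argument of \cite[3.1.3]{NiXu}, the canonical retraction $\rho_{\cY}:\widehat{\cX}_K\to\Sk(\cY)$ extends to a strong deformation retraction of $\widehat{\cX}_K$ onto $\Sk(\cY)$. For the second piece, the log canonical assumption at $x$ gives $\lct_x(X,\Delta)=1$, which places us in case \eqref{it:locallevel-lc} of Theorem \ref{thm-locallevel}. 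Choosing $w$ strictly greater than the maximum of $\wt_{\Delta}$ over the (finite) vertex set of $\Sk(\cY)$ gives $\Sk(\cY,x)^{\leq w}=\Sk(\cY)$, so Theorem \ref{thm-locallevel}\eqref{it:locallevel-lc} yields a collapse of $\Sk(\cY)$ onto $\Sk(f,x)$. By the definition recalled in \eqref{sss-locflow}, each elementary collapse is a strong deformation retraction on the underlying topological space; concatenating them produces a strong deformation retraction of $\Sk(\cY)$ onto $\Sk(f,x)$ that fixes $\Sk(f,x)$ pointwise at every time.

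Finally I would concatenate the two homotopies in the standard way, running the retraction of $\widehat{\cX}_K$ onto $\Sk(\cY)$ on the time interval $[0,1/2]$ and the collapse of $\Sk(\cY)$ onto $\Sk(f,x)$ on $[1/2,1]$. At time $1/2$ the first homotopy takes values in $\Sk(\cY)$, which is precisely the domain of the second, so the concatenation is continuous; and since a point of $\Sk(f,x)\subset\Sk(\cY)$ is fixed throughout both stages, we get a strong deformation retraction of $\widehat{\cX}_K$ onto $\Sk(f,x)$, as desired. There is no real obstacle at this level: all the substantive work has been absorbed into Theorem \ref{thm-locallevel}\eqref{it:locallevel-lc} (with the MMP and negativity-lemma arguments behind it) and into the construction of \eqref{sss-assumptionh}. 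The role of the corollary is simply to assemble these two ingredients.
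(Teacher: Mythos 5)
Your proof is correct and follows the same route as the paper: the paper's one-line argument combines the strong deformation retraction $\widehat{\cX}_K\to\Sk(\cY)$ from \eqref{sss-assumptionh} with the collapse $\Sk(\cY,x)=\Sk(\cY)\to\Sk(f,x)$ supplied by Theorem \ref{thm-locallevel}\eqref{it:locallevel-lc}. The only thing you leave tacit (as does the paper) is that the log canonical hypothesis forces $\Delta$ to be reduced near $x$, which is the standing hypothesis of Theorem \ref{thm-locallevel}; this is immediate since any multiple component of the $\Z$-divisor $\Delta$ through $x$ would give $\lct_x(X,\Delta)<1$.
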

\begin{proof}
Since $\Sk(\cY)$ is a strong deformation retract of $\widehat{\cX}_K$ by \eqref{sss-assum}, this follows at once from Theorem \ref{thm-locallevel}.
\end{proof}

\begin{remark}
Replacing $X$ by a sufficiently small \'etale neighbourhood of $x$ as in Theorem \ref{thm-locallevel}\eqref{it:locallevel-notlc} does not affect the motivic zeta function $Z_{f,x}(s)$, since, by its very definition, the zeta function only depends on the formal completion of the morphism $f$ at $x$ (in other words, on $f$ viewed as an element of the completed local ring $\widehat{\mathcal{O}}_{X,x}$).
\end{remark}

\begin{exam}
The assumption that $(X,\Delta)$ is log canonical cannot be omitted in Theorem \ref{thm-locallevel}\eqref{it:locallevel-lc} or Corollary \ref{cor-defret}, as is illustrated by the following example. Set $X=\A^2_k$ and let $x$ be the origin of $\A^2_k$. Let $C$ be an irreducible curve in $\A^2_k$ with a node at the origin, and let $L$ be a generic line in $\A^2_k$ through $x$. We set $\Delta=C+L$ and we choose a generator $f$ for the ideal sheaf $\mathcal{O}(-\Delta)$. Let $h:Y\to \A^2_k$ be the blow-up at the origin; this is a log resolution for $(\A^2_k,\Delta)$. Then $\Sk(\cY,x)=\Sk(\cY)$ has the homotopy type of a circle, while
$\Sk(f,x)$ consists only of the vertex of $\Sk(\cY)$ corresponding to the exceptional divisor of $h$ (in this vertex the weight function $\wt_\Delta$ takes the value $2/3$, whereas it is equal to $1$ at the other vertices).
\end{exam}

\begin{exam}
 The previous counterexample is somewhat artificial since we can solve the problem by passing to an \'etale neighbourhood of $x$ to break up $C$ into two irreducible components and make $\Sk(\cY,x)$ contractible. We will now give another example
 that shows that this is not always possible. This example was kindly suggested by one of the referees.

  We consider the polynomial $$f=u^{N-2}vw+v^N+w^N+u^{N+2}\in k[u,v,w]$$ where $N\geq 3$. We denote by $\Delta$ the zero locus of $f$ in $X=\mathbb{A}^3_k$ and by $x$ the origin of $\mathbb{A}^3_k$. The divisor $\Delta$ is reduced and has an isolated singularity at $x$.
   Let $Y_1\to X$ be the blow at $x$. We denote the exceptional divisor by $E_1$. 
    The strict transform $\Delta_1$ of $\Delta$ on $Y_1$ has a unique singular point $y_1$ lying over $x$, which is an $A_1$-singularity: on the blow-up chart with coordinates 
 $$(u, v'=v/u, w'=w/u),$$ the divisor $\Delta_1$ is defined by the equation 
$$v'w'+(v')^N+(w')^N+u^2=0,$$ and $y_1$ is the point $(0,0,0)$.   Let $Y_2\to Y_1$ be the blow up at $y_1$, with exceptional divisor $E_2$, and denote by $\Delta_2$ and $E_1'$ the strict  transforms of $\Delta_1$ and $E_1$ on $Y_2$, respectively.  The composed morphism $h:Y_2\to X$ is a log resolution for the pair $(X, \Delta)$.
 
 The intersection of the divisors $\Delta_1$ and $E_1$ on $Y_1$ is an irreducible curve $C$, which is defined by the equations  $$u=v'w'+(v')^N+(w')^N=0$$ on the blow-up chart with coordinates $(u,v',w')$ as above. This curve has a nodal singularity at $y_1$, so that $\Delta_2\cap E'_1\cap E_2$ consists of two points. Moreover, each pair of divisors in the set $\{\Delta_2,E_1',E_2\}$ meet along an irreducible curve. Therefore, the skeleton $\Sk(\cY_2,x)=\Sk(\cY_2)$ of $\cY_2=Y_2\times_{k[t]}R$  is homeomorphic to a 2-dimensional sphere.
  The pullback $h^*\Delta$ is given by $\Delta_2+NE'_1+(N+2)E_2$, 
 and the relative canonical divisor of $h$ is $K_{Y_2/X}=2E'_1+4E_2$.
 Thus the weights of $\Delta$ at $\Delta_2$, $E'_1$ and $E_2$ are given by $1$, $3/N$ and $5/(N+2)$, respectively.

 If $N=3$ then the pair $(X,\Delta)$ is log canonical at $x$ and $\Sk(f,x)=\Sk(\cY_2)$. The weight function is constant with value $1$ on $\Sk(\cY_2)$, and the formulas for the topological and motivic zeta functions of $f$ at $x$ in \eqref{sss-formula} show that they all have  a pole of order $3$ at $s=-1$. If, however, $N>3$, then
 the log canonical threshold of $(X,\Delta)$ at $x$ equals  $3/N$ and $\Sk(f,x)$ is the point of $\Sk(\cY_2)$ corresponding to the divisor $E'_1$.
  Thus $\Sk(f,x)$ is not homotopy equivalent to $\Sk(\cY_2)$, and replacing $X$ by an \'etale neighbourhood of $x$ will not change this situation.
\end{exam}

\section{Degenerations of Calabi-Yau varieties}\label{sec-CY}
\sss The aim of this section is to generalize Theorems \ref{thm-main} and \ref{thm-locallevel} to degenerations of Calabi-Yau varieties.
 Let
$X$ be a geometrically connected smooth projective $K$-scheme with trivial canonical sheaf, and let $\omega$ be a volume form on $X$.
  Then on the $K$-analytic space $X^{\an}$ we can again consider a weight function
 $$\wt_{\omega}:X^{\an}\to \R\cup \{+\infty\},$$ associated with the form $\omega$. This function was defined in \cite[4.4.4]{MuNi}.
  It is bounded below and the set of points in $X^{\an}$ where it reaches its minimal value is a non-empty
   compact subspace of $X^{\an}$ that we call the {\em essential skeleton} of $X$ and that we denote by $\Sk(X)$; see \cite[\S4.6]{MuNi}. This definition
   does not depend on the choice of $\omega$ because multiplying $\omega$ with an element $a\in K^{\times}$ shifts the weight function by the $t$-adic valuation of $a$.
    The essential skeleton $\Sk(X)$ was first considered by Kontsevich and Soibelman in their non-archimedean interpretation of Mirror Symmetry \cite{KS}.

  \sss We will now give a more explicit description of $\wt_{\omega}$ and $\Sk(X)$ that is sufficient to interpret the statements of our main results.
   Let $\cX$ be an $snc$-model of $X$ over $R$,
  that is,
a regular flat proper $R$-scheme endowed with an isomorphism of $K$-schemes $\cX_K\to X$ such that the special fiber $\cX_k$ is a strict normal crossings divisor.
   Then $\cX$ gives rise to
  a Berkovich skeleton $\Sk(\cX)$ in $X^{\an}$ that is canonically homeomorphic to the dual complex of $\cX_k$ (see \cite[\S3.1]{MuNi}).
   If $x$ is a vertex of $\Sk(\cX)$ corresponding to a prime component $E$ of $\cX_k$, then $$\wt_{\omega}(x)=\wt_{\omega}(E):=\frac{\nu}{N}.$$ Here $N$ is the multiplicity of $E$ in $\cX_k$ and $\nu-1$ is the multiplicity of $E$ in $\mathrm{div}_{\cX}(\omega)$, the divisor on $\cX$ associated with the rational section $\omega$ of the relative canonical line bundle $\omega_{\cX/R}$. Moreover, the weight function $\wt_{\omega}$ is affine on every face of $\Sk(\cX)$, by \cite[4.3.3]{MuNi}. These properties completely determine the
   restriction of $\wt_{\omega}$ to $\Sk(\cX)$.
   It follows from \cite[4.4.5(3)]{MuNi} that the weight function $\wt_{\omega}$ on $X^{\an}$ can only reach its minimal value at points of $\Sk(\cX)$. In other words, the essential skeleton $\Sk(X)$ is contained in $\Sk(\cX)$.
  Thus $\Sk(X)$ is the union of the faces of
  $\Sk(\cX)$ that are spanned by vertices corresponding to prime components $E$ in $\cX_k$ for which $\wt_{\omega}(E)$ is minimal (see \cite[4.5.5]{MuNi} for a generalization of this result).  The following lemma
  reduces the study of the weight function on $\Sk(\cX)$ to the case where $\cX$ is defined over an algebraic curve. We will need this reduction below to apply certain tools from the MMP.

\begin{lemma}\label{lemma-curve} Let
$X$ be a geometrically connected smooth projective $K$-scheme with trivial canonical sheaf, and let $\omega$ be a volume form on $X$.
Let $\cX$ be an $snc$-model of $X$ over $R$. Then we can always find the following objects.
 \begin{enumerate}
 \item \label{it:curve1} A smooth curve $\cC$ over $k$, a $k$-rational point $s$ on $\cC$ and a local parameter $t$ on $\cC$ at $s$, which gives rise to a $k$-morphism $\Spec R\to \cC$. We set $C=\cC\setminus \{s\}$.
 \item \label{it:curve2} A projective morphism $\cY\to \cC$ with geometrically connected fibers such that $\cY\times_{\cC}C\to C$ is smooth with trivial relative canonical sheaf, $\cY$ is regular and $\cY_s=\cY\times_{\cC}s$ is a divisor with strict normal crossings.
 \item A relative volume form $\omega'$ on $\cY\times_{\cC}C$ over $C$; with a slight abuse of notation, we will denote the base change of $\omega'$ to $\cY\times_{\cC}\Spec(K)$ with the same symbol.
 \item An isomorphism of simplicial complexes $$\Sk(\cY\times_{\cC}\Spec(R))\to \Sk(\cX)$$ that identifies the weight function $\wt_{\omega'}$ on $\Sk(\cY\times_{\cC}\Spec(R))$ with the weight function $\wt_{\omega}$ on $\Sk(\cX)$.
 \end{enumerate}
\end{lemma}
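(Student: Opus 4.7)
The plan is to descend the projective $snc$-model $\cX$ from $\Spec R$ to an algebraic family over a smooth $k$-curve by Artin approximation. Take $\cC_0 = \A^1_k$ with coordinate $t$ and origin $s_0$; the $t$-adic completion of $\mathcal{O}_{\cC_0, s_0} = k[t]_{(t)}$ is precisely $R$, so the natural morphism $\Spec R \to \cC_0$ factors through $s_0$. Artin approximation, applied to the functor of isomorphism classes of finitely presented projective schemes, produces an étale neighbourhood $\cC \to \cC_0$ of $s_0$, a $k$-rational preimage $s$ of $s_0$ with $\widehat{\mathcal{O}}_{\cC, s} \cong R$, a projective morphism $\cY \to \cC$, and an $R$-isomorphism $\cY \times_{\cC} \Spec R \cong \cX$. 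The scheme $\cC$ is smooth over $k$ (being étale over $\A^1_k$) and the pullback of $t$ to $\cC$ is a local parameter at $s$, which realizes item $(1)$ of the lemma.

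After shrinking $\cC$ Zariski-locally around $s$, I would arrange that $\cY$ is regular, that $\cY_s$ is a strict normal crossings divisor, that $\cY \to \cC$ has geometrically connected fibers, and that $\cY \times_\cC C \to C$ is smooth, where $C = \cC \setminus \{s\}$. Each of these is an open condition on $\cC$ that holds on the $t$-adic completion at $s$ by hypothesis on $\cX$. The volume form $\omega$ trivializes $\omega_{X/K}$, and this trivialization spreads out, after possibly further shrinking $\cC$, to a generator $\omega'$ of $\omega_{\cY \times_\cC C / C}$ satisfying $\omega'|_{\cY_K} = \omega$; this furnishes items $(2)$ and $(3)$.

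For item $(4)$, the isomorphism $\cY \times_\cC \Spec R \cong \cX$ identifies the two $snc$-models, hence induces a canonical simplicial isomorphism between their Berkovich skeleta via the shared dual complex of their special fibers. At the vertex corresponding to a prime component $E$ of the special fiber, both weight functions evaluate to $\nu_E/N_E$, where $N_E$ is the multiplicity of $E$ in the special fiber and $\nu_E - 1$ is the multiplicity of $E$ in the divisor of the chosen volume form; these multiplicities match under the isomorphism and under $\omega'|_{\cY_K} = \omega$. Since the weight functions are affine on each face, agreement at vertices extends to agreement on the entire skeleton.

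The main obstacle is the invocation of Artin approximation: one must appeal to the fact that the finitely presented data of a projective $snc$-model with a specified formal isomorphism type over $R$ descends, after étale base change, to a family over a smooth $k$-curve. The remaining checks—openness of regularity, strict normal crossings, smoothness of the generic fiber, triviality of the relative canonical sheaf, and the combinatorial matching of weight functions—are routine.
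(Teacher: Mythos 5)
The proposal has a genuine gap at the very first step. Artin/Greenberg approximation does \emph{not} produce an $R$-isomorphism $\cY\times_{\cC}\Spec R\cong\cX$. What approximation gives is the following: for any chosen integer $N$, one can find $\cC$, $s$ and a projective scheme $\cY\to\cC$ together with an isomorphism
$$\varphi:\cX\times_R R/(t^N)\;\longrightarrow\;\cY\times_{\cC}\Spec(R/(t^N)),$$
i.e.\ agreement only \emph{modulo $t^N$}, not over $R$. Agreement over $R$ itself would be a form of algebraization over the curve which is simply not available: the $R$-point of the Hilbert scheme defined by $\cX$ need not factor through any $k$-curve, and approximation is an assertion about truncations. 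If an $R$-isomorphism were attainable, the entire lemma would be a tautology and the paper would not need the more delicate argument it gives.

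This error propagates. Since the two models are only identified modulo $t^N$, their \emph{generic} fibers $\cY\times_{\cC}K$ and $X$ are not identified at all, so the phrase ``$\omega'|_{\cY_K}=\omega$'' is not meaningful, and one cannot conclude item $(4)$ by the direct transport of structure you propose. The correct route (the one taken in the paper) is to observe that the isomorphism of dual complexes in $(4)$ only uses the isomorphism of special fibers $\cX_k\cong\cY_s$ (so already $N=1$ suffices for the simplicial part), and that the weight function at a vertex is computed from the multiplicity of the corresponding component in $\mathrm{div}_{\cX}(\omega)$, a divisor supported on $\cX_k$. One then works with the log canonical sheaf $\omega_{\cX^+/S^+}$, uses \cite[7.1]{IKN} to see that $M=H^0(\cX,\omega_{\cX^+/S^+})$ is a free rank-one $R$-module compatible with the corresponding sheaf on the algebraic family, lifts the class of $\omega$ in $M\otimes_R R/(t^N)$ to a relative volume form $\omega'$ on the curve side, and then argues that for $N$ sufficiently large the divisors of $\omega$ and $\omega'$ in their respective models agree (since both are supported on the special fiber and hence determined modulo a fixed power of $t$). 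The logarithmic description of the weight function \cite[3.2.2]{NiXu} then gives the matching. Your outline omits this mechanism entirely; without it, replacing your purported $R$-isomorphism by the actual mod-$t^N$ isomorphism leaves no way to control $\omega'$ or to compare the weight functions.
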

\begin{proof}
 The proof is similar to that of
\cite[4.2.4]{NiXu}. Let $N$ be a positive integer. By a standard spreading out argument combined with Greenberg Approximation, we find
 objects as in \eqref{it:curve1} and \eqref{it:curve2} together with an
 isomorphism of $R$-schemes $$\varphi:\cX\times_R R/(t^N)\to \cY\times_{\cC} \Spec(R/(t^N)).$$
 In particular, $\varphi$ induces an isomorphism of $k$-schemes $\cX_k\to \cY_s$ that we can use to identify the dual complex $\Sk(\cX)$ of $\cX_k$ with the dual complex
 $\Sk(\cY\times_{\cC}\Spec(R))$ of $\cY_s$.
   We denote by $S^+$ the spectrum of $R$ with its standard log structure and by $\cX^+$ the scheme $\cX$ endowed with the divisorial log structure associated with $\cX_k$.
  Likewise, we denote by $\cC^+$ the curve $\cC$ with the log structure induced by $s$ and by $\cY^+$ the scheme $\cY$ with the divisorial log structure associated with $\cY_s$.
 The $R$-module $$M=H^0(\cX,\omega_{\cX^+/S^+})$$ is free of rank one by \cite[7.1]{IKN}.
  Multiplying $\omega$ with $t^a$ for some integer $a$ shifts the weigh function $\wt_{\omega}$ by the constant $a$, so that we can assume that $\omega$ extends to
  a generator of $M$. But \cite[7.1]{IKN} also tells us that the $\mathcal{O}_{\cC}$-module $f_*\omega_{\cY^+/\cC^+}$ is locally free of rank one
   and that its base change to $R/(t^N)$ is canonically isomorphic to $M\otimes_{R} R/(t^N)$. Shrinking $\cC$ around $s$ if necessary, we can lift the class
   of $\omega$ in $M\otimes_R R/(t^N)$ to an element $\omega'$ of $H^0(\cY,\omega_{\cY^+/\cC^+})$ that is a relative volume form over $C$.
   If $N$ is sufficiently large, then the divisors of $\omega$ and $\omega'$, viewed as sections of the line bundles $\omega_{\cX^+/S^+}$ and $\omega_{\cY^+/\cC^+}$, respectively,
   coincide (note that both divisors are supported on $\cY_s\cong \cX_k$). Then it follows from the logarithmic interpretation of the weight function in \cite[3.2.2]{NiXu} that
   the restriction of $\wt_{\omega}$ to $\Sk(\cX)$ coincides with the restriction of $\wt_{\omega'}$ to $\Sk(\cY\times_{\cC}\Spec(R))$.
\end{proof}

  \begin{theorem}\label{thm-KS}
Let
$X$ be a geometrically connected smooth projective $K$-scheme with trivial canonical sheaf, and let $\omega$ be a volume form on $X$.
 Let $\cX$ be an $snc$-model of $X$ over $R$ and let $\tau$ be a maximal face of $\Sk(\cX)$ such that
   the weight function $\wt_{\omega}$ is constant on $\tau$ with value $w$. Then $w$ is the minimal value of $\wt_\omega$ on
   $X^{\an}$ and $\tau$ is contained in the essential skeleton $\Sk(X)$.
  \end{theorem}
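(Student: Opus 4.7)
The proof adapts the MMP strategy of Theorem~\ref{thm-main} to the Calabi-Yau setting. First, I would apply Lemma~\ref{lemma-curve} to replace $\cX$ by $\cY\times_{\cC}\Spec(R)$, where $f\colon \cY\to\cC$ is a projective morphism over a smooth $k$-curve with marked point $s$, $\cY$ is regular of dimension $n+1$, the fiber $\cY_s=\sum_{i\in I}N_iE_i$ is $snc$, and a relative volume form $\omega'$ replaces $\omega$. Under the identification $\Sk(\cY\times_{\cC}\Spec(R))\cong\Sk(\cX)$, one has $\wt_\omega(E_i)=\nu_i/N_i$, where $\nu_i-1$ is the multiplicity of $\mathrm{div}(\omega')$ along $E_i$. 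Let $C$ be the connected component of the stratum $\bigcap_{j\in J}E_j$ corresponding to $\tau$; by maximality of $\tau$, $C\cap E_i=\emptyset$ for every $i\notin J$, and by hypothesis $\nu_j=wN_j$ for all $j\in J$.

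Next, I would run a relative MMP with scaling for the log smooth (hence $dlt$) pair $(\cY,(\cY_s)_{\red})$ over $\cC$. Since its boundary has the same support as $f^*s$, this is equivalent to running the relative MMP for the $klt$ pair $(\cY,(\cY_s)_{\red}-\varepsilon\cY_s)$ for small $\varepsilon>0$, so by \cite{BCHM} it terminates with a minimal model $\cY=\cY_0\dashrightarrow\cY_1\dashrightarrow\cdots\dashrightarrow\cY_m$ such that $K_{\cY_m}+D_m$ is nef over $\cC$, where $D_m$ is the pushforward of $(\cY_s)_{\red}$. An induction on $\ell$, exactly as in the proof of Theorem~\ref{thm-main} and using the minimal log canonical center criterion \cite[6.6]{Ambro}, shows that each map $\cY_\ell\dashrightarrow\cY_{\ell+1}$ is an open embedding in a neighborhood of the image $C_\ell$ of $C$. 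Indeed, if an extremal ray $R_\ell$ with $(K_{\cY_\ell}+D_\ell)\cdot R_\ell<0$ were to contract a curve through a point of $C_\ell$, then $E_i^\ell\cdot R_\ell=0$ for all $i\notin J$, and
\begin{equation*}
R_\ell\cdot(K_{\cY_\ell}+D_\ell)=R_\ell\cdot(K_{\cY_\ell/\cC}+D_\ell-wf_\ell^*s)=\sum_{i}(\nu_i-wN_i)\,R_\ell\cdot E_i^\ell=0,
\end{equation*}
contradicting the strict negativity. Hence the components $E_j$ ($j\in J$) survive on $\cY_m$ with their original $\nu_j$ and $N_j$.

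Finally, I would exploit the Calabi-Yau structure to identify $w$. The divisor $K_{\cY_m/\cC}+D_m=\sum_i\nu_iE_i^m$ is supported on the central fiber (the generic fiber $X$ has trivial canonical and $D_m$ is vertical) and is $f_m$-nef. Since it vanishes on the generic fiber, the fibration analogue of the negativity lemma \cite[3.39]{kollar-mori} (equivalently, the Zariski-type negative semi-definiteness of the intersection pairing on the components of a connected fiber of $f_m$) forces $K_{\cY_m/\cC}+D_m=c\,f_m^*s$ for some $c\in\Q_{\ge 0}$; comparing coefficients gives $\nu_i=cN_i$ for every $E_i^m$ on $\cY_m$, and evaluating at any $j\in J$ forces $c=w$. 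Thus $K_{\cY_m}+D_m=f_m^*(K_\cC+ws)$ is semi-ample over $\cC$, so $(\cY_m,D_m)$ is a good minimal $dlt$-model of $X$ over $R$, and the main result of \cite{NiXu} yields $\Sk(X)=\Sk(\cY_m)$. Since $\wt_\omega$ takes the value $w$ at every vertex of $\Sk(\cY_m)$ and is affine on each face, it is identically $w$ on $\Sk(X)$; this is by definition the minimum locus of $\wt_\omega$ on $X^{\an}$, so the minimum equals $w$ and $\tau\subset\Sk(X)$.

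The main technical obstacle is the last step: whereas Theorem~\ref{thm-main} uses the negativity lemma for the birational morphism $Y_m\to X$, the Calabi-Yau case requires the analogous statement for the fibration $\cY_m\to\cC$ applied to the vertical $f_m$-nef divisor $K_{\cY_m/\cC}+D_m$. A secondary subtlety is checking that $(\cY_m,D_m)$ is indeed a good minimal $dlt$-model, which is what permits the identification $\Sk(X)=\Sk(\cY_m)$ via \cite{NiXu}.
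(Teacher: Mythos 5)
Your proof follows the paper's strategy in all the key structural steps: reduce to the algebraic curve setting via Lemma~\ref{lemma-curve}, run a relative MMP with scaling for $(\cY,(\cY_s)_{\red})$ over $\cC$, and use the minimal log canonical center argument from the proof of Theorem~\ref{thm-main} together with the identity $\nu_j=wN_j$ for $j\in J$ to show that the MMP sequence is an open embedding near the stratum corresponding to $\tau$. The divergence is in how you close the argument: the paper at this point simply invokes \cite[3.3.4]{NiXu}, which states directly that every component of $\cY_s$ surviving on $(\cY_m)_s$ has minimal weight, and is done. You instead re-derive the essential content of that citation: you argue that the vertical $f_m$-nef divisor $K_{\cY_m/\cC}+D_m$ must be a rational multiple $cf_m^*s$ of the fiber, identify $c=w$ from the surviving $E_j$, deduce that $(\cY_m,D_m)$ is a \emph{good} minimal $dlt$-model, and then cite the main comparison result of \cite{NiXu} identifying $\Sk(X)$ with the skeleton of a good minimal $dlt$-model. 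This is a legitimate alternative and makes explicit what lies inside the black box \cite[3.3.4]{NiXu}; the trade-off is that you still lean on \cite{NiXu} for the final identification, so the gain in self-containedness is partial.

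One small inaccuracy: the reference to \cite[3.39]{kollar-mori} for the step ``vertical $+$ nef over $\cC$ $\Rightarrow$ multiple of the fiber'' is not quite right. That lemma concerns proper \emph{birational} morphisms and exceptional divisors, not fibrations. The statement you actually need is the fibration analogue of Zariski's lemma: for a proper morphism with connected fibers onto a smooth curve, a $\Q$-divisor supported on a closed fiber and nef over the base is a rational multiple of that fiber. This is correct and is proved by slicing with $n-1$ general very ample hypersurfaces to reduce to the classical surface statement (using a Bertini-type argument for connectedness of the sliced fiber and to keep the components $E_i$ distinct), not by appealing to the negativity lemma as stated. You flag the issue yourself in your concluding paragraph, which is good, but the bibliographic reference should be corrected.
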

\begin{proof}
By Lemma \ref{lemma-curve} we can assume that $\cX$ and $\omega$ are defined over an algebraic curve. More precisely, we may assume that $X=\cY\times_{\cC}\Spec(K)$, $\cX=\cY\times_{\cC}\Spec(R)$ and $\omega=\omega'$, where
$\cC$, $\cY$ and $\omega'$ are taken as in the statement of Lemma \ref{lemma-curve}.
  We will use similar arguments as in the proof of Theorem \ref{thm-main}.  We write $\cY_s=\sum_{i\in I}N_i E_i$, where the $E_i$ are the irreducible components of $\cY_s$. The face $\tau$ corresponds to
 a connected component $U$ of $E_J=\cap_{j\in J}E_j$ for some non-empty subset $J$ of $I$.
   The volume form $\omega'$ is
  a rational section of the relative canonical sheaf $\omega_{\cY/\cC}$ and thus defines a divisor $$\mathrm{div}_{\cY}(\omega')=\sum_{i\in I}(\nu_i-1)E_i$$ on
  $\cY$.
  Our assumption that $\wt_{\omega}$ is constant on
  $\tau$ with value $w$ is equivalent to the property that
 $$\mathrm{div}_{\cY}(\omega')+(\cY_s)_{\red}=w\cY_s$$  on some open neighbourhood of $U$ in $\cY$, since $\tau$ is a maximal face of $\Sk(\cX)$.

     We set $\Delta=(\cY_s)_{\red}$ and we run an MMP with scaling of an ample divisor for the pair $(\cY,\Delta)$ over $\cC$. This is the same as running a relative MMP for $(\cY,\Delta-\varepsilon \cY_s)$ for a sufficiently small $\varepsilon>0$ such that the latter pair is $klt$.
     By \cite[\S2]{HX}, the outcome is a series of birational maps
 $$\cY=\cY_0\dashrightarrow \cY_1 \dashrightarrow \ldots \dashrightarrow \cY_m$$ where each of the $\cY_i$ is a $\Q$-factorial normal projective $\cC$-scheme
 and each of the birational maps
 is a map of $\cC$-schemes whose restriction over $C$ is an isomorphism.
  If we set $\Delta_i=(\cY_i)_{s,\red}$  then the pair $(\cY_i,\Delta_i)$ is $dlt$, for every $i$.
   The same arguments as in the proof of Theorem \ref{thm-main} show that $\cY\dashrightarrow \cY_m$ is an open immersion on some open neighbourhood of $U$ in $\cY$.
   In our set-up, the result in \cite[3.3.4]{NiXu}
   states in particular that for every component $E$ of $\cY_s$ that is not contracted in $(\cY_m)_s$, the value $\wt_{\omega}(E)$ is the minimal weight of
  $\omega$ on $X^{\an}$. Since the components corresponding to the vertices of $\tau$ satisfy this condition, we find that $w$ is the minimal weight of
  $\omega$ on $X^{\an}$.
\end{proof}

\sss We still denote by
$X$ a geometrically connected smooth projective $K$-scheme with trivial canonical sheaf, and by $\omega$ a volume form on $X$.
 Let $\cX$ be an $snc$-model of $X$ over $R$. Then $\Sk(\cX)$ is a strong deformation retract of $X^{\an}$, by \cite[3.1.4]{NiXu}.
  In \cite[4.2.4]{NiXu} we deduced from the results in \cite{dFKX} that the essential skeleton $\Sk(X)$ is a strong deformation retract of $\Sk(\cX)$, and thus of the
  $K$-analytic space $X^{\an}$.
 It seems natural to expect that the weight function induces a flow on $X^{\an}$
   in the direction of decreasing values of $\wt_{\omega}$ that contracts $X^{\an}$ onto the subspace $\Sk(X)$ where $\wt_{\omega}$ takes its minimal value.
  Theorem \ref{thm-KS} supports this expectation. Further evidence is provided by the following theorem, which is the analog of Theorem \ref{thm-locallevel}.

  \begin{theorem}\label{thm-CY} Let
$X$ be a geometrically connected smooth projective $K$-scheme with trivial canonical sheaf, and let $\omega$ be a volume form on $X$.
 Let $\cX$ be a projective $snc$-model of $X$ over $R$.
  For every real number $w$ we denote by $\Sk(\cX)^{\leq w}$ the subcomplex of $\Sk(\cX)$ generated by the vertices
  where the value of $\wt_\omega$ is at most $w$. Then there exists a collapse of $\Sk(\cX)$ to the essential skeleton $\Sk(X)$ that simultaneously collapses $\Sk(\cX)^{\leq w}$ to  $\Sk(X)$ for all $w$ greater than the minimal value of $\wt_{\omega}$ on $X^{\an}$.
    \end{theorem}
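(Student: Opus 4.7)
The plan is to parallel the proof of Theorem \ref{thm-locallevel}\eqref{it:locallevel-lc}, replacing the contraction $h:Y\to X$ by the projection $\cY\to \cC$ onto a base curve. By Lemma \ref{lemma-curve} I may reduce to the case where $\cX=\cY\times_{\cC}\Spec(R)$ for a projective family $\cY\to\cC$ over a smooth curve and $\omega$ is induced by a relative volume form $\omega'$, so that the relative MMP over $\cC$ is available via \cite{HX}. Running the MMP with scaling of an ample divisor for $(\cY,(\cY_s)_{\red})$ over $\cC$, as in the proof of Theorem \ref{thm-KS}, yields a sequence
\[\cY=\cY_0\dashrightarrow\cY_1\dashrightarrow\cdots\dashrightarrow\cY_m\]
of birational maps of $\cC$-schemes with $(\cY_i,\Delta_i)$ always $dlt$ (where $\Delta_i$ denotes the pushforward of $(\cY_s)_{\red}$) and $K_{\cY_m}+\Delta_m$ nef over $\cC$. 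Letting $\cU_i\subset\cY_i$ be the locus where $\cY_i$ is regular and $(\cY_i)_s$ has strict normal crossings, the skeleta $\Sk(\cU_i)$ embed canonically inside $\Sk(\cX)$, and Theorem \ref{thm-KS} ensures that every component of $(\cY_m)_s$ has weight equal to the minimal value $w_{\min}$ of $\wt_\omega$ on $X^{\an}$, so that $\Sk(\cU_m)=\Sk(X)$.

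For a fixed $w>w_{\min}$ I split $(\cY_s)_{\red}=A+B$ into the reduced divisors of components of weight $\le w$ and of weight $>w$ respectively; the subcomplex $\Sk(\cU_i)^{\leq w}$ is then the dual complex of the pushforward $A_i$ of $A$ to $\cY_i$. The heart of the proof, mirroring the argument in Theorem \ref{thm-locallevel}\eqref{it:locallevel-lc}, is the following dichotomy at each MMP step with extremal ray $R_i$: either $R_i$ contracts no log canonical center of $A_i$ (and the level subcomplex is unchanged), or some prime component $E$ of $A_i$ satisfies $R_i\cdot E>0$ (and \cite[Thm.~19]{dFKX} supplies an elementary collapse of $\Sk(\cU_i)^{\leq w}$ onto $\Sk(\cU_{i+1})^{\leq w}$). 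To establish the dichotomy I consider the divisor $D_i:=K_{\cY_i/\cC}+\Delta_i-w(\cY_i)_s$, whose multiplicity along $E_j$ equals $\nu_j-wN_j$, non-positive on $A_i$ and strictly positive on $B_i$. Writing $D_i=D_1^{(i)}-D_2^{(i)}$ as an effective difference with supports in $B_i$ and $A_i$ respectively, and using that $(\cY_i)_s\equiv 0$ over $\cC$, the MMP inequality gives $R_i\cdot(D_1^{(i)}-D_2^{(i)})\le 0$; the perturbation trick of Theorem \ref{thm-locallevel}\eqref{it:locallevel-lc} (replace $\Delta_i$ by $A_i+(1-\varepsilon)B_i$ for small $\varepsilon>0$) then forces $R_i\cdot D_2^{(i)}>0$ whenever $R_i$ contracts a log canonical center of $A_i$, because no component of the effective divisor $D_1^{(i)}-\varepsilon B_i$ contains that center.

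Assembling the elementary collapses along the MMP sequence yields a collapse of $\Sk(\cX)^{\leq w}$ onto $\Sk(X)$. Since the underlying MMP sequence is independent of $w$, these collapses are induced by a single sequence of elementary moves, and a careful tracking of which cells are removed at each step produces the required simultaneous collapse for all $w>w_{\min}$. The main obstacle I anticipate is the precise handling of the scaling in the MMP, which a priori gives only $R_i\cdot(K_{\cY_i}+\Delta_i)\le 0$ rather than strict negativity; this should be resolved, as in the proof of Theorem \ref{thm-KS}, by working with the $klt$ perturbation $(\cY,(\cY_s)_{\red}-\varepsilon\cY_s)$, which shares its MMP with the original pair.
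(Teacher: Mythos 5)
Your proposal follows essentially the same route as the paper's proof: reduction via Lemma \ref{lemma-curve} to a family over an algebraic curve, the MMP with scaling for $(\cY,(\cY_s)_{\red})$ over $\cC$, the decomposition $(\cY_s)_{\red}=A+B$ by weight with the $\varepsilon$-perturbation $A+(1-\varepsilon)B$, and the dichotomy at each step resolved by \cite[Thm.~19]{dFKX}. The only cosmetic difference is that you attribute the fact that $\Sk(\cY_m)=\Sk(X)$ to Theorem \ref{thm-KS}, whereas the paper cites \cite[3.3.4]{NiXu} directly (which is also what the proof of Theorem \ref{thm-KS} ultimately relies on), so this is just a matter of attribution, not content.
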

\begin{proof}
 We can again assume that $X=\cY\times_{\cC}\Spec(K)$, $\cX=\cY\times_{\cC}\Spec(R)$ and $\omega=\omega'$, where
$\cC$, $\cY$ and $\omega'$ are taken as in the statement of Lemma \ref{lemma-curve}. Denote by $w_0$ the minimal value of $\wt_{\omega}$ on $X^{\an}$.  If we run a relative MMP of $(\cY,(\cY_{s})_{\red})$ over $\cC$ with scaling of an ample divisor, then we obtain a sequence of birational maps of $\cC$-schemes
\begin{equation}\label{eq:MMPseq}
\cY=\cY_0\dashrightarrow \cY_1 \dashrightarrow \ldots \dashrightarrow \cY_m\end{equation}
 such that $\cY_m$ is a minimal $dlt$-model.
 For ease of notation, we set $\Sk(\cY)=\Sk(\cY\times_{\cC}\Spec(R))$.

 Let $i$ be an element of $\{1,\ldots,m\}$. Even though $\cY_i\times_{\cC}\Spec(R)$ is usually no longer an $snc$-model of $X$, one can still define its skeleton by deleting the points where the special fiber
  of $\cY_i\times_{\cC}\Spec(R)$ is not a strict normal crossings divisor and taking the skeleton of the resulting open subscheme of
  $\cY_i\times_{\cC}\Spec(R)$. This is the definition
  that was given in \cite[3.1.2]{NiXu}. We will denote this skeleton by $\Sk(\cY_i)$. The $dlt$ property guarantees that $\Sk(\cY_i)$ is still canonically homeomorphic to the dual complex of the divisor $(\cY_i)_s$, since every stratum of $(\cY_i)_s$ contains a non-empty open subset of points where $(\cY_i)_s$ has strict normal crossings.
 We again denote by $\Sk(\cY_i)^{\leq w}$ the subcomplex of $\Sk(\cY_i)$ generated by the vertices where the value of $\wt_\omega$ is at most $w$. The skeleton
  $\Sk(\cY_m)$ is equal to the essential skeleton $\Sk(X)$ by \cite[3.3.4]{NiXu}, and the MMP process induces a collapse of $\Sk(\cY)$ to  $\Sk(X)$ by \cite[Cor.~22]{dFKX} (see also \cite[3.2.8]{NiXu}).   We will now show that it simultaneously collapses $\Sk(\cY)^{\leq w}$ to $\Sk(X)$ for all $w\geq w_0$; the proof is completely analogous to that of Theorem
\ref{thm-locallevel}.

  We fix $w\geq w_0$ and we write $(\cY_s)_{\red}$ as a sum of reduced effective divisors $(\cY_s)_{\red}=A+B$ such that
   $\wt_{\omega}(E)\leq w$ for every prime component $E$ of $A$ and $\wt_{\omega}(E)> w$ for every prime component $E$ of $B$.
  We choose $\varepsilon>0$ sufficiently small and we set $\Delta=A+(1-\varepsilon)B$. We denote by $\Delta_i$, $A_i$ and $B_i$ the pushforwards to $\cY_i$ of $\Delta$, $A$ and $B$, respectively, for every $i$ in $\{0,\ldots,m\}$.  Note that the vertices of $\Sk(\cY_i)^{\leq w}$ correspond precisely to the components of $A_i$.
  The extremal ray $R_i\subset NE(\cY_i/\cC)$ inducing $\cY_i\dashrightarrow \cY_{i+1}$ is  $(K_{\cY_i}+\Delta_i)$-negative, for every $i<m$.
  Moreover, $\Delta_m=(\cY_m)_{s,\red}$ because all the components of $B$ are contracted on $\cY_m$. Thus \eqref{eq:MMPseq} is also an MMP-sequence for $(\cY,\Delta)$.

 One shows in the same way as in the proof of Theorem \ref{thm-locallevel} that
 either  $\cY_i\dashrightarrow  \cY_{i+1}$ does not contract any log canonical center of $A_i$, or $R_i\cdot E>0$ for some prime component $E$ of $A_i$. In the former case,
    $\Sk(\cY_i)^{\leq w}=\Sk(\cY_{i+1})^{\leq w}$. In the latter case, it follows from \cite[Thm.~19]{dFKX} that $\Sk(\cY_{i+1})^{\leq w}$ is a collapse of $\Sk(\cY_i)^{\leq w}$. Composing all these collapses, we obtain a collapse of $\Sk(\cY)^{\leq w}$ onto $\Sk(X)$.
\end{proof}

\end{document}